\theoremstyle{plain}
{
%\swapnumbers
  \newtheorem{thm}{Theorem}[section]
  \newtheorem{defn}[thm]{Definition}
  
  \newtheorem{lem}[thm]{Lemma}

}
\newcommand{\R}{{\rm R}}
\newcommand{\ad}{ad}
\DeclareMathOperator{\GL}{GL}
\newcommand{\Spec}{\text{\rm Spec}\,}
\DeclareMathOperator{\Hom}{Hom}
\DeclareMathOperator{\Aut}{Aut}
\DeclareMathOperator{\Cent}{Cent}
\DeclareMathOperator{\Sym}{Sym}
\DeclareMathOperator{\Ga}{{\mathbf G}_a}
\newcommand{\id}{\text{\rm id}}
\DeclareMathOperator{\ZZ}{{\mathbb Z}}
\DeclareMathOperator{\QQ}{{\mathbb Q}}
\DeclareMathOperator{\Lie}{Lie}
\newcommand{\st}{\scriptstyle}
\DeclareMathOperator{\Gm}{{\mathbb G}_m}
\begin{document}

\title[Grothendieck--Serre conjecture over semi-local Dedekind domains]
{On the Grothendieck--Serre conjecture concerning principal $G$-bundles over semi-local Dedekind domains}

\author{I.~Panin}
\thanks{Theorem~\ref{th:main} is proved due to the support of the
Russian Science Foundation (grant no. 14-11-00456).}
\address{Steklov Institute of Mathematics at St.-Petersburg, Fontanka 27, St.-Petersburg 191023, Russia}
\email{paniniv@gmail.com}

\author{A.~Stavrova}
\thanks{The second author is a postdoctoral fellow of the program
6.50.22.2014 ``Structure theory, representation theory and geometry of algebraic groups''
at St. Petersburg State University.}
\address{Department of Mathematics and Mechanics, St. Petersburg State University,
St. Petersburg, Russia}
\email{anastasia.stavrova@gmail.com}

\maketitle

\begin{abstract}
Let $R$ be a semi-local Dedekind domain
and let $K$ be the field of fractions of R. Let $G$ be a
reductive semisimple simply connected $R$-group scheme
such that every semisimple normal $R$-subgroup scheme of $G$
contains a split $R$-torus $\mathbb G_{m,R}$.
We prove that
%for any Noetherian affine scheme $\Asc=\Spec A$ % smooth
%over $k$,
the kernel of the map
%$$ H^1_{\text{\'et}}(\Asc\times_{\Spec k} {\Spec R},G)\to H^1_{\text{\'et}}(\Asc\times_{\Spec k} \Spec K,G) $$
$$ H^1_{\text{\'et}}(R,G)\to H^1_{\text{\'et}}(K,G) $$
\noindent
induced by the inclusion of $R$ into $K$, is trivial.
%If $R$ is the semi-local ring of several points on
%a $k$-smooth scheme, then it suffices to require that $k$ is infinite
%and keep the same assumption concerning $G$.
This result partially extends the Nisnevich theorem~\cite[Thm.4.2]{Ni}.
%Serre---Grothendieck
%conjecture for such $R$ and $G$, proved in~\cite{PaSV}.

\end{abstract}

\section{Introduction}
A well-known conjecture due to J.-P.~Serre and A.~Grothendieck
\cite[Remarque, p.31]{Se},
\cite[Remarque 3, p.26-27]{Gr1},
and
\cite[Remarque 1.11.a]{Gr2}
asserts that given a regular local ring $R$ and its field of
fractions $K$ and given a reductive group scheme $G$ over $R$ the
map
$$ H^1_{\text{\'et}}(R,G)\to H^1_{\text{\'et}}(K,G), $$
\noindent
induced by the inclusion of $R$ into $K$, has trivial kernel.

The Grothendieck--Serre conjecture holds for semi-local regular
rings containing a field. That is proved in \cite{FP} and in \cite{Pa1}.
The first of these two papers is heavily based on results of
\cite{PSV} and \cite{Pa2}. For the detailed history of the topic see, for instance,
\cite{FP}. Assuming that $R$ is not equicharacteristic, the conjecture has been established only in the case
where $G$ is an $R$-torus~\cite{C-T-S} and in the case where $G$ is a reductive group
scheme over a discrete valuation ring $R$~\cite[Theorem 4.2]{Ni}.
In the present paper, we extend the latter result to the case of an isotropic semisimple simply connected 
reductive group scheme over a semi-local Dedekind domain $R$;
see Theorem~\ref{th:main}.

\section{Preliminaries}

\subsection{Parabolic subgroups and elementary subgroups}

Let $A$ be a commutative ring. Let $G$ be an isotropic reductive group scheme over $A$, and
let $P$ be a parabolic subgroup of $G$ in the sense of~\cite{SGA3}.
Since the base $\Spec A$ is affine, the group $P$ has a Levi subgroup $L_P$~\cite[Exp.~XXVI Cor.~2.3]{SGA3}.
There is a unique parabolic subgroup $P^-$ in $G$ which is opposite to $P$ with respect to $L_P$,
that is $P^-\cap P=L_P$, cf.~\cite[Exp. XXVI Th. 4.3.2]{SGA3}.  We denote by $U_P$ and $U_{P^-}$ the unipotent
radicals of $P$ and $P^-$ respectively.

\begin{defn}
\label{defn:E_P}
The \emph{elementary subgroup $E_P(A)$ corresponding to $P$} is the subgroup of $G(A)$
generated as an abstract group by $U_P(A)$ and $U_{P^-}(A)$.
\end{defn}

Note that if $L'_P$ is another Levi subgroup of $P$,
then $L'_P$ and $L_P$ are conjugate by an element $u\in U_P(A)$~\cite[Exp. XXVI Cor. 1.8]{SGA3}, hence
$E_P(A)$ does not depend on the choice of a Levi subgroup or of an opposite subgroup
$P^-$, respectively. We suppress the particular choice of $L_P$ or $P^-$ in this context.

\begin{defn}
A parabolic subgroup $P$ in $G$ is called
\emph{strictly proper}, if it intersects properly every normal semisimple subgroup of $G$.
\end{defn}

We  will use the following result that is a combination
of~\cite{PS} and~\cite[Exp. XXVI, \S 5]{SGA3}.

\begin{lem}\label{lem:EE}
Let $G$ be a reductive group scheme over a commutative ring $A$, and let $R$ be a commutative $A$-algebra.
Assume that $A$ is a semilocal ring. Then the subgroup $E_P(R)$ of $G(R)$ is the same for any
minimal parabolic $A$-subgroup $P$ of $G$. If, moreover, $G$ contains a strictly proper parabolic $A$-subgroup,
the subgroup $E_P(R)$ is the same for any strictly proper parabolic $A$-subgroup $P$.
%(ii) If $A$ is not necessarily semilocal, but for every maximal ideal $m$ in $A$, every normal semisimple
%subgroup of $G_{A_m}$ contains $(\Gm_{,A_m})^2$, then the subgroup $E_P(R)$ of $G(R)=G_R(R)$ is the same for any
%strictly proper parabolic $R$-subgroup $P$ of $G_R$.
%In both these cases $E_P(A)$ is normal in $G(A)$.
\end{lem}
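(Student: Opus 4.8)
The proof combines the conjugacy results of~\cite[Exp.~XXVI, \S 5]{SGA3} with the relative root subgroup calculus of~\cite{PS}; the plan is as follows. If $G$ is anisotropic over $A$ (no proper parabolic $A$-subgroup), then $G$ itself is the only minimal parabolic, its unipotent radical is trivial, and both assertions are vacuous, so I assume $G$ isotropic and fix a maximal split torus $S\subseteq G$ over $A$, which exists since $A$ is semilocal~\cite[Exp.~XXVI]{SGA3}. Let $\Phi=\Phi(G,S)$ be the system of relative roots, with relative root subgroups $X_\alpha\subseteq G$ ($\alpha\in\Phi$) as in~\cite{PS}; recall each $X_\alpha$ is normalized by $\Cent_G(S)$. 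Up to conjugation by $N_G(S)(A)$, the parabolic $A$-subgroups of $G$ containing $S$ are the standard ones $P_J$, indexed by subsets $J$ of the set $\Delta$ of simple relative roots, and $E_{P_J}(R)=\langle X_\alpha(R): \alpha\in\Phi\smallsetminus\Phi_J\rangle$ where $\Phi_J=\Phi\cap\ZZ J$. The standard minimal parabolic is $P_\emptyset$, and I write $E(R):=E_{P_\emptyset}(R)=\langle X_\alpha(R):\alpha\in\Phi\rangle$.

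First I would reduce to standard parabolics. By~\cite[Exp.~XXVI, \S 5]{SGA3}, over the semilocal ring $A$ any two maximal split tori of $G$ are conjugate under $G(A)$, and two parabolic $A$-subgroups of the same type are conjugate under $G(A)$; since every parabolic $A$-subgroup contains a maximal split torus, every minimal parabolic $A$-subgroup is $G(A)$-conjugate to $P_\emptyset$, and every strictly proper parabolic $A$-subgroup is $G(A)$-conjugate to some standard $P_J$ with $\Delta_i\not\subseteq J$ for every irreducible component $\Phi_i$ of $\Phi$ (existence of a strictly proper parabolic forces every normal semisimple subgroup of $G$ to be isotropic, and strict properness of $P_J$ is then precisely this condition on $J$). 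If $Q={}^gP$ with $g\in G(A)$, then taking the Levi subgroup and the opposite parabolic of $Q$ to be the $g$-conjugates of those of $P$ gives $E_Q(R)={}^gE_P(R)$. Hence it suffices to prove \emph{(a)} $E_{P_J}(R)=E(R)$ for every standard strictly proper $P_J$, and \emph{(b)} $E(R)$ is normalized by $G(A)$.

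For \emph{(a)} I would run the standard generation argument of~\cite{PS}. Let $P_J$ be standard and strictly proper and $\alpha\in\Phi_J$, say $\alpha\in\Phi_i$. Then $J\cap\Delta_i\neq\emptyset$ while $\Delta_i\not\subseteq J$, so, the Dynkin diagram of $\Phi_i$ being connected, there are adjacent simple roots $\delta\in J\cap\Delta_i$ and $\beta\in\Delta_i\smallsetminus J$; in particular $\Phi_i$ has rank $\geq 2$. Using the Chevalley-type commutator relations among the $X_\gamma$ of~\cite{PS}, one expresses $X_\delta(R)$ --- e.g.\ via a commutator of $X_{\delta+\beta}(R)$ with $X_{-\beta}(R)$ --- inside the subgroup generated by the $X_\gamma(R)$ with $\gamma\in\Phi_i\smallsetminus\Phi_J$; iterating over $J\cap\Delta_i$ and over the components shows $X_\alpha(R)\in E_{P_J}(R)$ for all $\alpha\in\Phi$, i.e.\ $E_{P_J}(R)=E(R)$. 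This is vacuous when $J=\emptyset$ and, together with \emph{(b)}, already yields the first assertion.

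The hard part will be \emph{(b)}. Since ${}^gE(R)$ is the full elementary subgroup attached to the maximal split torus ${}^gS$, assertion \emph{(b)} is exactly the independence of $E(R)$ of the chosen maximal split torus, and --- in contrast to the case where every component of $\Phi$ has rank $\geq 2$, where $E_P(R)$ is even normal in $G(R)$ --- here one cannot avoid restricting to $A$-points. My plan is to test normalization on a generating set of $G(A)$: elements of $U_{P_\emptyset}(A)$ and $U_{P_\emptyset^-}(A)$ lie in $E(R)$ and hence normalize it; elements of $\Cent_G(S)(A)$ normalize each $X_\alpha$, hence $E(R)$; and representatives of the relative Weyl group $N_G(S)(A)/\Cent_G(S)(A)$ can be chosen inside $\langle X_\alpha(A),X_{-\alpha}(A)\rangle\subseteq E(R)$. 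Everything then comes down to the Bruhat-type generation statement $G(A)=\langle U_{P_\emptyset}(A),\,U_{P_\emptyset^-}(A),\,N_G(S)(A)\rangle$ over the semilocal ring $A$, which I would extract from~\cite[Exp.~XXVI]{SGA3} and~\cite{PS}; proving this cleanly is the step I expect to be the real obstacle, the remainder being the conjugacy bookkeeping of~\cite[Exp.~XXVI, \S 5]{SGA3} and the commutator calculus of~\cite{PS}.
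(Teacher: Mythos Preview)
The paper does not prove this lemma at all: its entire proof is the sentence ``See~\cite[Theorem 2.1]{St-poly}.'' So you are not reproducing the paper's argument but rather reconstructing the proof of the cited result; your outline is a reasonable sketch of how that proof goes, and your identification of the two inputs --- the conjugacy statements of~\cite[Exp.~XXVI, \S 5]{SGA3} and the commutator calculus of~\cite{PS} --- matches what the paper itself names above the lemma.

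Two remarks on your sketch. For step~(a), writing ``via a commutator of $X_{\delta+\beta}(R)$ with $X_{-\beta}(R)$'' hides the real work: the Chevalley commutator~\eqref{eq:Chev} only yields $X_\delta\bigl(N_{\delta+\beta,-\beta,1,1}(u,v)\bigr)$ times higher terms, and one must know that the images of these structure maps generate the target module $V_\delta$ and that the higher terms can be absorbed inductively. This surjectivity is exactly the technical core of~\cite{PS}, so your citation is right but the phrase ``e.g.\ via a commutator'' undersells what is being invoked. For step~(b), your Bruhat generation $G(A)=\langle U_{P_\emptyset}(A),U_{P_\emptyset^-}(A),N_G(S)(A)\rangle$ is true, but the clean way to get it over a semilocal $A$ is the Gauss/big-cell route rather than a direct relative Bruhat decomposition over~$A$: over each residue field $k_i$ one uses Borel--Tits to find $e_i\in E(k_i)$ with $\bar g e_i$ in the open cell $\Omega=U^-LU^+$, lifts the $e_i$ to a single $e\in E(A)$ by surjectivity of $U^\pm(A)\to\prod_i U^\pm(k_i)$, and concludes $ge\in\Omega(A)$ since $\Omega$ is open; hence $G(A)=L(A)\cdot E(A)$, and $L(A)$ visibly normalizes $E(R)$. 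This is almost certainly the argument in~\cite{St-poly}, and it dissolves the ``real obstacle'' you flagged.
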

\begin{proof}
See~\cite[Theorem 2.1]{St-poly}.
\end{proof}

\subsection{Torus actions on reductive groups}

Let $R$ be a commutative ring with 1, and let $S=(\Gm_{,R})^N=\Spec(R[x_1^{\pm 1},\ldots,x_N^{\pm 1}])$
be a split $N$-dimensional torus over $R$. Recall that the character group
$X^*(S)=\Hom_R(S,\Gm_{,R})$ of $S$ is canonically isomorphic to $\ZZ^N$.
If $S$ acts $R$-linearly on an $R$-module $V$, this module has a natural $\ZZ^N$-grading
$$
V=\bigoplus_{\lambda\in X^*(S)}V_\lambda,
$$
where
$$
V_\lambda=\{v\in V\ |\ s\cdot v=\lambda(s)v\ \mbox{for any}\ s\in S(R)\}.
$$
Conversely, any $\ZZ^N$-graded $R$-module $V$ can be provided with an $S$-action by the same rule.

Let $G$ be a reductive group scheme over $R$ in the sense of~\cite{SGA3}. Assume that $S$ acts on $G$
by $R$-group automorphisms. The associated Lie algebra functor $\Lie(G)$ then acquires
a $\ZZ^N$-grading compatible with the Lie algebra structure,
$$
\Lie(G)=\bigoplus_{\lambda\in X^*(S)}\Lie(G)_\lambda.
$$

We will use the following version of~\cite[Exp. XXVI Prop. 6.1]{SGA3}.

\begin{lem}\label{lem:T-P}
%Let $G$ be a reductive group over a commutative ring $R$, and let $S$ be a split torus of rank $n$
%acting on $G$ by group automorphisms.
Let $L=\Cent_G(S)$ be the subscheme of $G$ fixed by $S$. Let
$\Psi\subseteq X^*(S)$ be an $R$-subsheaf of sets closed under addition of characters.

(i) If $0\in\Psi$, then there exists
a unique smooth connected closed subgroup $U_\Psi$ of $G$ containing $L$ and satisfying
\begin{equation}\label{eq:LieUPsi}
\Lie(U_\Psi)=\bigoplus_{\lambda\in\Psi}\Lie(G)_\lambda.
\end{equation}
Moreover, if $\Psi=\{0\}$, then $U_\Psi=L$; if $\Psi=-\Psi$, then $U_\Psi$ is reductive; if $\Psi\cup(-\Psi)=X^*(S)$,
then $U_\Psi$ and $U_{-\Psi}$ are two opposite parabolic subgroups of $G$ with the common Levi subgroup
$U_{\Psi\cap(-\Psi)}$.

(ii) If $0\not\in\Psi$, then there exists a unique smooth connected unipotent closed subgroup $U_\Psi$ of $G$
normalized by $L$ and satisfying~\eqref{eq:LieUPsi}.
%$$
%\Lie(U_\Psi)=\bigoplus_{\lambda\in\Psi}\Lie(G)_\lambda.
%$$
\end{lem}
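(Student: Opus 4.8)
The plan is to deduce this from \cite[Exp.~XXVI, Prop.~6.1]{SGA3} (together with \cite[Exp.~XXII, \S~5]{SGA3}), the statement above being essentially a repackaging that allows $S$ to act by arbitrary, not necessarily inner, automorphisms and allows $\Psi$ to be an arbitrary additively closed subset of $X^*(S)$. First I would reduce to the case in which $S$ is a closed subtorus of $G$ acting by conjugation: replace $G$ by the semidirect product $\widetilde G=G\rtimes S$, which is again a reductive $R$-group scheme (its geometric unipotent radical is unipotent, normal, hence contained in $G$, hence trivial). Inside $\widetilde G$ the torus $S$ is a closed subtorus, $\Cent_{\widetilde G}(S)=L\times S$, and $\Lie(\widetilde G)_\lambda=\Lie(G)_\lambda$ for $\lambda\ne 0$ while $\Lie(\widetilde G)_0=\Lie(L)\oplus\Lie(S)$. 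If one has the analogue $\widetilde U_\Psi\subseteq\widetilde G$ of the sought subgroup for $(\widetilde G,S)$, then $U_\Psi:=\widetilde U_\Psi\cap G$ works for $(G,S)$: its Lie algebra is the intersection of $\Lie(\widetilde U_\Psi)$ with the graded direct summand $\Lie(G)$, it contains $L$ in case~(i) and is normalized by $L$ in case~(ii), and it is smooth connected because in case~(i) one has $\widetilde U_\Psi=U_\Psi\rtimes S$ (as $S\subseteq \widetilde U_\Psi$ and the quotient by $G$ is $S$), while in case~(ii) $\widetilde U_\Psi/(\widetilde U_\Psi\cap G)$ is at once unipotent and a subgroup scheme of the torus $S$, hence trivial, so $\widetilde U_\Psi=U_\Psi\subseteq G$.

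Next I would pass to a split situation. Since $\Lie(G)$ is finitely generated projective, only finitely many characters $\Phi\subseteq X^*(S)\setminus\{0\}$ occur as weights, and the conclusion depends only on $\Psi\cap(\Phi\cup\{0\})$ — this matters because $\Psi$ itself is typically infinite. After an \'etale cover $R\to R'$ I may assume $G$ split and $S$ contained in a split maximal torus $T$, so that $\Lie(G)_\lambda=\bigoplus_{\alpha\in\Phi(G,T),\ \alpha|_S=\lambda}\Lie(G)_\alpha$ for $\lambda\ne 0$ and $L$ is generated by $T$ and $\{U_\alpha:\alpha|_S=0\}$. Then $\Phi_\Psi:=\{\alpha\in\Phi(G,T):\alpha|_S\in\Psi\}$ is a closed subset of the root system $\Phi(G,T)$, and when $0\notin\Psi$ it is moreover disjoint from $-\Phi_\Psi$ (since $\Psi$ closed under addition together with $0\notin\Psi$ forces $\Psi\cap(-\Psi)=\emptyset$). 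Now \cite[Exp.~XXII, \S~5]{SGA3} provides the smooth connected closed subgroup of $G_{R'}$ directly spanned by $\{U_\alpha\}_{\alpha\in\Phi_\Psi}$, together with $L$ when $0\in\Psi$; this is $U_\Psi$ over $R'$, and by construction $\Lie(U_\Psi)=\bigoplus_{\lambda\in\Psi}\Lie(G)_\lambda$. The three special cases in~(i) then read off from the dictionary with $\Phi(G,T)$: $\Psi=\{0\}$ gives $\Phi_\Psi=\emptyset$ and $U_\Psi=L$; $\Psi=-\Psi$ makes $\Phi_\Psi$ symmetric, so $U_\Psi$ is reductive; and $\Psi\cup(-\Psi)=X^*(S)$ makes $\Phi_\Psi$ a parabolic subset of $\Phi(G,T)$ containing the roots of $L$, so $U_\Psi$ is parabolic with Levi $U_{\Psi\cap(-\Psi)}$ and opposite $U_{-\Psi}$. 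Part~(ii) is the same construction with $\Phi_\Psi$ now a unipotent closed set, giving a unipotent $U_\Psi$ normalized by $L$.

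It remains to establish uniqueness, and I expect this to be the crux — both in itself and because it is precisely what allows the construction over $R'$ to descend to $R$ (a smooth connected closed subgroup uniquely characterized by the stated properties cannot depend on the chosen cover or torus, so the two pullbacks to $R'\otimes_R R'$ agree, and then smoothness, connectedness, containment of / normalization by $L$, and the Lie algebra all descend). To prove it one argues that any competitor $H$ is normalized by $T$: its Lie algebra is a sum of $T$-weight spaces, and by the rigidity of $T$-stable subgroups underlying \cite[Exp.~XXII, \S~5]{SGA3} — distinct roots being distinct characters of $T$, the root subgroups admit no nontrivial $T$-equivariant cross-maps — a smooth connected subgroup normalized by $T$ is determined by its weight-graded Lie algebra, forcing $H=U_\Psi$. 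The genuinely delicate point here is that over a base of positive or unequal characteristic the passage from a subgroup scheme to its Lie algebra is not faithful in general (graphs of Frobenius-twisted homomorphisms), so one must really use the compatibility with the $S$-action through $T$; showing, in case~(ii), that $H$ is contained in the unipotent radical $U_G(\mu)$ of the parabolic attached to a cocharacter $\mu\in X_*(S)$ that is strictly positive on $\Phi_\Psi$ — via the orbit maps $H\to G/P_G(\mu)$ and $H\to\Cent_G(\mu)$ having zero differential — is the representative technical step.
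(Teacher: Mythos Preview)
Your proposal is correct and follows the same route as the paper: reduce to the split case and invoke \cite[Exp.~XXII]{SGA3}, with descent handled via uniqueness, exactly as in the proof of \cite[Exp.~XXVI, Prop.~6.1]{SGA3}. The paper's own proof is a one-line reference to precisely these sources; your semidirect-product reduction to an inner $S$-action and your discussion of the uniqueness step are elaborations the paper leaves implicit, but they do not constitute a different approach.
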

\begin{proof}
The statement immediately follows by faithfully flat descent from the standard facts about the subgroups of
split reductive groups proved in~\cite[Exp. XXII]{SGA3}; see the proof of~\cite[Exp. XXVI Prop. 6.1]{SGA3}.
\end{proof}

\begin{defn}
The sheaf of sets
$$
\Phi=\Phi(S,G)=\{\lambda\in X^*(S)\setminus\{0\}\ |\ \Lie(G)_\lambda\neq 0\}
$$
is called the \emph{system of relative roots of $G$ with respect to $S$}.
\end{defn}

Choosing a total ordering on the $\QQ$-space $\QQ\otimes_{\ZZ} X^*(S)\cong\QQ^n$, one defines the subsets
of positive and negative relative roots $\Phi^+$ and $\Phi^-$, so that $\Phi$ is a disjoint
union of $\Phi^+$, $\Phi^-$, and $\{0\}$. By Lemma~\ref{lem:T-P} the closed subgroups
$$
U_{\Phi^+\cup\{0\}}=P,\qquad U_{\Phi^-\cup\{0\}}=P^-
$$
are two opposite parabolic subgroups of $G$ with the common Levi subgroup $\Cent_G(S)$.
Thus, if a reductive group $G$ over $R$ admits a non-trivial action of a split torus,
then it has a proper parabolic subgroup. The converse is true Zariski-locally, see~Lemma~\ref{lem:relroots} below.

\subsection{Relative roots and subschemes}

In order to prove our main result, we need to use the notions of relative roots and relative root
subschemes. These notions were initially introduced and studied in~\cite{PS}, and further developed
in~\cite{St-serr}.

Let $R$ be a commutative ring. Let $G$ be a reductive group scheme over $R$. Let $P$ be a parabolic subgroup
scheme of $G$ over $R$, and let $L$ be a Levi subgroup of $P$.
By~\cite[Exp. XXII, Prop. 2.8]{SGA3} the root system $\Phi$ of $G_{\overline{k(s)}}$, $s\in\Spec R$,
is constant locally in the Zariski topology on $\Spec R$. The type of the root system of
$L_{\overline{k(s)}}$ is determined by a Dynkin subdiagram
of the Dynkin diagram of $\Phi$, which is also constant Zariski-locally on $\Spec R$
by~\cite[Exp. XXVI, Lemme 1.14 and Prop. 1.15]{SGA3}. In particular, if $\Spec R$ is connected,
all these data are constant on $\Spec R$.

\begin{lem}\label{lem:relroots}\cite[Lemma 3.6]{St-serr}
Let $G$ be a reductive group over a connected commutative ring $R$, $P$ be a parabolic subgroup of $G$, $L$ be a Levi
subgroup of $P$, and $\bar L$ be the image of $L$ under the natural
homomorphism $G\to G^{\ad}\subseteq \Aut(G)$. Let $D$ be the Dynkin diagram of the root system $\Phi$ of
$G_{\overline{k(s)}}$ for any $s\in\Spec A$. We identify $D$ with a set of simple roots of $\Phi$ such that
$P_{\overline{k(s)}}$ is a standard positive parabolic subgroup with respect to $D$. Let $J\subseteq D$
be the set of simple roots such that $D\setminus J\subseteq D$ is the subdiagram corresponing to $L_{\overline{k(s)}}$.
Then there are a unique maximal split subtorus
$S\subseteq\Cent(\bar L)$ and a subgroup $\Gamma\le \Aut(D)$ such that $J$ is invariant under $\Gamma$,
and
for any $s\in\Spec R$ and any split maximal torus $T\subseteq\bar L_{\overline{k(s)}}$
the kernel of the natural surjection
\begin{equation}\label{eq:T-S}
X^*(T)\cong\ZZ\Phi\xrightarrow{\ \pi\ } X^*(S_{\overline{k(s)}})\cong \ZZ\Phi(S,G)
\end{equation}
is generated by all roots $\alpha\in D\setminus J$,
and by all differences $\alpha-\sigma(\alpha)$, $\alpha\in J$, $\sigma\in\Gamma$.
\end{lem}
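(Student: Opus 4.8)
The plan is to reduce everything to the classical theory of split reductive groups by faithfully flat descent, exactly as the proof of Lemma~\ref{lem:T-P} does, and then to translate the statement about the relative root system into a statement about the sublattice of $X^*(T)$ killed by restriction to $S$. First I would work over a connected base $R$ so that the Dynkin diagram $D$, the Levi-type subdiagram $D\setminus J$, and the $^*$action of the absolute Galois-type group on $D$ are all constant; the general case then follows because $\Spec R$ has finitely many connected components. Passing to $G^{\ad}$ does no harm for the purpose of computing $X^*(S)$, so I may assume $G$ is adjoint and $L=\bar L$.

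Next I would construct the torus $S$. The center $\Cent(\bar L)$ is a group of multiplicative type over $R$, and it contains a unique maximal split subtorus $S$ (uniqueness and existence of the maximal split subtorus of a group of multiplicative type over a connected base is standard, cf.~\cite[Exp.~X]{SGA3}). Geometrically, over a separable closure of $k(s)$, we have $\Cent(\bar L)_{\overline{k(s)}}$ with character lattice the quotient of $X^*(T)=\ZZ\Phi$ by the sublattice spanned by $D\setminus J$ — this is because the roots in $D\setminus J$ are precisely those that vanish on $\Cent(L)$ and span the character lattice of the torus part of $L/Z(L)$. The maximal split subtorus corresponds, on character lattices, to the maximal quotient on which the relevant finite group $\Gamma\le\Aut(D)$ (acting through permutations of simple roots, coming from the $*$action on $D$ together with the fact that $S$ is split hence $\Gamma$invariant data) acts trivially. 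Concretely, $X^*(S_{\overline{k(s)}})$ is the quotient of $\ZZ\Phi$ by the subgroup generated by $D\setminus J$ and by the differences $\alpha-\sigma(\alpha)$ for $\alpha\in J$, $\sigma\in\Gamma$; dually, $S$ is the identity component of the simultaneous kernel. The condition that $J$ be $\Gamma$invariant is exactly what makes this quotient compatible with the grading, i.e.\ what makes $S$ act on $G$ and produce the relative root system $\Phi(S,G)$ via the projection $\pi$.

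The remaining point is to identify $\pi$ with this explicit quotient map and to see that the description is independent of $s$ and of the chosen split maximal torus $T\subseteq\bar L_{\overline{k(s)}}$. Independence of the choice of $T$ is the usual conjugacy of maximal tori in a split reductive group over a separably closed (or even just a field) together with the fact that inner automorphisms act trivially on $X^*$ up to the identifications; independence of $s$ follows from the already-invoked constancy of $D$, $J$, and $\Gamma$ over the connected base. That $\ker\pi$ is generated by $D\setminus J$ and the differences $\alpha-\sigma(\alpha)$ is then a direct unwinding: an element of $\ZZ\Phi$ restricts trivially to $S$ iff it lies in the span of the characters killing $S$, and those are exactly generated by the roots trivial on all of $\Cent(\bar L)$ (the $D\setminus J$ part) together with the ``anti-invariant'' directions collapsed when passing from $\Cent(\bar L)$ to its maximal split subtorus (the $\alpha-\sigma(\alpha)$ part).

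The main obstacle I expect is the bookkeeping around $\Gamma$: one must verify that there genuinely is a well-defined finite subgroup $\Gamma\le\Aut(D)$ — arising from the $*$action / the form of $\bar L$ — for which $J$ is invariant and for which the quotient by $D\setminus J$ and the differences $\alpha-\sigma(\alpha)$ is torsion-free of the right rank and agrees with $X^*(S)$ on the nose (not merely up to finite index). This requires care because a priori the quotient of $\ZZ\Phi$ by those elements could have torsion; ruling that out, and matching it with the \emph{maximal} split subtorus rather than some smaller split subtorus, is the delicate step. Once that is settled, descent from $\overline{k(s)}$ back to $R$ is routine since all the data in sight are defined over $R$ and the formation of centers, maximal split subtori, and Lie algebra gradings commutes with the relevant base changes. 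Since this lemma is quoted from \cite[Lemma~3.6]{St-serr}, in the paper itself I would simply cite that reference; the sketch above is how I would reprove it from scratch.
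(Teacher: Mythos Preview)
The paper does not supply its own proof of this lemma: it is stated with the citation \cite[Lemma~3.6]{St-serr} and no proof environment follows. You correctly note this in your final sentence, and your sketch is a reasonable outline of how the argument in the cited reference proceeds (reduction to the adjoint group, identification of $X^*(\Cent(\bar L))$ with $\ZZ\Phi/\langle D\setminus J\rangle$, and extraction of the maximal split quotient via the $*$-action $\Gamma$). Since there is nothing in the present paper to compare against, your proposal is consistent with what the paper does, namely to cite the external result.

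One remark on the sketch itself: the concern you raise about torsion in $\ZZ\Phi/\langle D\setminus J;\ \alpha-\sigma(\alpha)\rangle$ is the genuine technical point. The character group $X^*(S)$ of a torus is free, so if that quotient had torsion the kernel of $\pi$ would be strictly larger than the subgroup you name, contradicting the lemma as stated. Establishing saturation of this subgroup (equivalently, that the quotient is free) is indeed the crux, and it relies on the specific combinatorics of root lattices and Dynkin-diagram automorphisms rather than on general descent; your sketch flags this but does not resolve it, so if you were writing the proof in full that is where the work would lie.
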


In~\cite{PS}, we introduced a system of relative roots $\Phi_P$ with respect to a parabolic
subgroup $P$ of a reductive group $G$ over a commutative ring $R$. This system $\Phi_P$ was defined
independently over each member $\Spec A=\Spec A_i$
of a suitable finite disjoint Zariski covering
$$
\Spec R=\coprod\limits_{i=1}^m\Spec A_i,
$$
such that over each $A=A_i$, $1\le i\le m$, the root system $\Phi$ and the Dynkin diagram $D$ of $G$ is constant.
Namely, we considered the formal projection
$$
\pi_{J,\Gamma}\colon\ZZ \Phi
\longrightarrow \ZZ\Phi/\left<D\setminus J;\ \alpha-\sigma(\alpha),\ \alpha\in J,\ \sigma\in\Gamma\right>,
$$
and set $\Phi_P=\Phi_{J,\Gamma}=\pi_{J,\Gamma}(\Phi)\setminus\{0\}$. The last claim of Lemma~\ref{lem:relroots}
allows to identify $\Phi_{J,\Gamma}$ and $\Phi(S,G)$ whenever $\Spec R$ is connected.

\begin{defn}
In the setting of Lemma~\ref{lem:relroots} we call $\Phi(S,G)$ a \emph{system
of relative roots with respect to the parabolic subgroup $P$ over $R$} and denote it by $\Phi_P$.
\end{defn}

If $A$ is a field or a local ring, and $P$ is a minimal parabolic subgroup of $G$,
then $\Phi_P$ is nothing but the relative root system of $G$ with respect to a maximal split subtorus
in the sense of~\cite{BoTi} or, respectively,~\cite[Exp. XXVI \S 7]{SGA3}.

We have also defined in~\cite{PS} irreducible components of systems of relative roots, the subsets of positive and negative
relative roots, simple relative roots, and the height of a root. These definitions are immediate analogs
of the ones for usual abstract root systems, so we do not reproduce them here.

Let $R$ be a commutative ring with 1.
For any finitely generated projective $R$-module $V$, we denote by $W(V)$ the natural affine scheme
over $R$ associated with $V$, see~\cite[Exp. I, \S 4.6]{SGA3}.
%$S\mapsto V\otimes_RS$ is represented by an affine group scheme
%$\WW(V)=\Spec\Sym^*(V^*)$, where $V^*$ is the dual $R$-module, $\Sym^*$ is the symmetric algebra.
Any morphism of $R$-schemes $W(V_1)\to W(V_2)$
is determined by an element $f\in\Sym^*(V_1^\vee)\otimes_R V_2$, where $\Sym^*$ denotes the symmetric algebra,
and $V_1^\vee$ denotes the dual module of $V_1$. If $f\in\Sym^d(V_1^\vee)\otimes_R V_2$,
we say that the corresponding morphism is
homogeneous of degree $d$.
%In particular, morphisms of degree $1$ are linear morphisms.
By abuse of notation, we also write $f:V_1\to V_2$ and call it {\it a degree $d$
homogeneous polynomial map from $V_1$ to $V_2$}. In this context, one has
$$
f(\lambda v)=\lambda^d f(v)
$$
for any $v\in V_1$ and $\lambda\in R$.

\begin{lem}\cite[Lemma 3.9]{St-serr}\label{lem:relschemes}.
In the setting of Lemma~\ref{lem:relroots}, for any $\alpha\in\Phi_P=\Phi(S,G)$ there exists a closed
$S$-equivariant  embedding of $R$-schemes
$$
X_\alpha\colon W\bigl(\Lie(G)_\alpha\bigr)\to G,
$$
satisfying the following condition.

\begin{itemize}
\item[\bf{($*$)}] Let $R'/R$ be any ring extension such that $G_{R'}$ is split with
respect to a maximal split $R'$-torus $T\subseteq L_{R'}$. Let $e_\delta$,
$\delta\in\Phi$, be a Chevalley basis of $\Lie(G_{R'})$, adapted to $T$ and $P$, and $x_\delta\colon\Ga\to G_{R'}$, $\delta\in\Phi$, be the associated
system of 1-parameter root subgroups
{\rm(}e.g. $x_\delta=\exp_\delta$ of~\cite[Exp. XXII, Th. 1.1]{SGA3}{\rm)}.
Let
$$
\pi:\Phi=\Phi(T,G_{R'})\to\Phi_P\cup\{0\}
$$
be the natural projection.
Then for any
$u=\hspace{-8pt}\sum\limits_{\delta\in\pi^{-1}(\alpha)}\hspace{-8pt}a_\delta e_\delta\in\Lie(G_{R'})_\alpha$
%\hspace{-1ex}\sum_{\substack{-2n \leq i \leq 2n \\[.7ex] i\neq 0}}
one has
\begin{equation}\label{eq:Xalpha-prod}
X_\alpha(u)=
\Bigl(\prod\limits_{\delta\in\pi^{-1}(\alpha)}\hspace{-8pt}x_{\delta}(a_\delta)\Bigr)\cdot
\prod\limits_{i\ge 2}\Bigl(\prod\limits_{
\st
\theta\in \pi^{-1}(i\alpha)
}
\hspace{-8pt}x_\theta(p^i_{\theta}(u))\Bigr),
\end{equation}
where every $p^i_{\theta}:\Lie(G_{R'})_\alpha\to R'$ is a homogeneous polynomial map of degree $i$,
and the products over $\delta$ and $\theta$ are taken in any fixed order.
\end{itemize}
\end{lem}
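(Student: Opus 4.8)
The plan is to construct $X_\alpha$ by descent from the split case, so the first task is to pin down the split local picture. Choose a faithfully flat extension $R'/R$ over which $G$ becomes split with respect to a maximal split torus $T\subseteq L_{R'}$, fix a Chevalley basis $e_\delta$ adapted to $T$ and $P$, and recall the Chevalley commutator formula for the $x_\delta$. The natural projection $\pi\colon\Phi(T,G_{R'})\to\Phi_P\cup\{0\}$ identifies $\Lie(G_{R'})_\alpha$ with the span of the $e_\delta$, $\delta\in\pi^{-1}(\alpha)$, and the subgroup $U_{(\alpha)}:=U_{\pi^{-1}(\ZZ_{>0}\alpha)}$ from Lemma~\ref{lem:T-P}(ii) has Lie algebra $\bigoplus_{i\ge 1}\Lie(G_{R'})_{i\alpha}$. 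The key structural input is that $U_{(\alpha)}$ is a smooth connected unipotent group with a filtration by the subgroups $U_{\pi^{-1}(\ZZ_{\ge i}\alpha)}$ whose successive quotients are the vector groups $W(\Lie(G_{R'})_{i\alpha})$; in particular $U_{(\alpha)}$ is, as a scheme, isomorphic to the product $\prod_{i\ge 1}W(\Lie(G_{R'})_{i\alpha})$ via the product of root subgroups in any fixed order. This gives, for each ordering, a well-defined map $W(\Lie(G_{R'})_\alpha)\to U_{(\alpha)}$ sending $u$ to the right-hand side of~\eqref{eq:Xalpha-prod}; the content is that the correction terms $x_\theta(\cdots)$ for $\theta\in\pi^{-1}(i\alpha)$, $i\ge 2$, are forced, and that they are polynomial in the coordinates $a_\delta$ of $u$, homogeneous of degree $i$ once we use the $S$-grading to see that the coefficient of $e_\theta$ scales by $\lambda^i$ when $u\mapsto\lambda u$.

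Next I would carry out the descent. The subtlety is that $X_\alpha$ as just defined on $W(\Lie(G_{R'})_\alpha)$ depends a priori on $R'$, on the Chevalley basis, and on the chosen ordering; to descend along $R\to R'$ one must check the cocycle condition over $R'\otimes_R R'$. Here the crucial observation is that the \emph{scheme} $U_{(\alpha)}$ and its vector-group filtration are defined already over $R$ — indeed $U_{(\alpha)}$ is one of the subgroups $U_\Psi$ of Lemma~\ref{lem:T-P}(ii) for the $S$-action on $G$ over $R$, with $\Psi=\pi^{-1}(\ZZ_{>0}\alpha)\cap X^*(S)$ reinterpreted appropriately, and the graded pieces $\Lie(G)_{i\alpha}$ are finitely generated projective $R$-modules by the standard local freeness of the root spaces. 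So the composite $W(\Lie(G)_\alpha)\hookrightarrow U_{(\alpha)}\hookrightarrow G$ — projection onto the lowest graded piece followed by a chosen scheme splitting of the filtration — is already an $R$-morphism; what one gains from the split description is the explicit product formula~\eqref{eq:Xalpha-prod} and the polynomiality and degrees of the $p^i_\theta$, which are insensitive to further base change. Concretely, I would first define $X_\alpha$ over each connected component where $\Phi$, $D$, $J$, $\Gamma$ are constant, fix there a scheme section $W(\Lie(G)_\alpha)\to U_{(\alpha)}$ of the natural projection (exists because $U_{(\alpha)}$ is, Zariski-locally on $\Spec R$ and then globally by the filtration being by vector groups over an affine base, a successive extension of split vector groups, hence the underlying scheme is affine space over $W(\Lie(G)_\alpha)$ and the projection admits a section), declare this to be $X_\alpha$, and then verify property $(*)$ by base-changing to $R'$ and comparing with the product of $x_\delta$'s.

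The verification of $(*)$ is where the real work sits, so I would treat it as the main obstacle. After base change to $R'$, both $X_\alpha(u)$ and the product $\bigl(\prod_{\delta\in\pi^{-1}(\alpha)}x_\delta(a_\delta)\bigr)\cdot\prod_{i\ge 2}\prod_{\theta\in\pi^{-1}(i\alpha)}x_\theta(b^i_\theta)$ lie in $U_{(\alpha),R'}$ and have the same image in the lowest quotient $W(\Lie(G_{R'})_\alpha)$, namely $u$; since $U_{(\alpha),R'}$ is, as a scheme, $W(\Lie(G_{R'})_\alpha)\times\prod_{i\ge 2}W(\Lie(G_{R'})_{i\alpha})$ via that same product of root subgroups, the two elements differ by a factor in $\prod_{i\ge 2}U_{\pi^{-1}(\ZZ_{\ge i}\alpha),R'}$, and this factor is \emph{some} morphism $W(\Lie(G_{R'})_\alpha)\to\prod_{i\ge 2}W(\Lie(G_{R'})_{i\alpha})$; writing it in coordinates defines the $p^i_\theta$ as polynomial maps (morphisms of affine schemes), and the $S$-equivariance of $X_\alpha$ — which holds because $X_\alpha$ was built from $S$-stable data — together with the fact that $e_\theta$ has $S$-weight $i\alpha$ forces $p^i_\theta(\lambda u)=\lambda^i p^i_\theta(u)$, i.e. homogeneity of degree $i$. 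The points to be careful about are: that the relevant root spaces $\Lie(G)_{i\alpha}$ are projective of locally constant rank so that $W(\cdot)$ behaves well and the filtration splits; that the scheme section can be chosen $S$-equivariantly (use that $S$ acts on each $W(\Lie(G)_{i\alpha})$ linearly through the single weight $i\alpha$, so any section of the filtration can be averaged/corrected to an equivariant one, or simply note the lowest-weight projection itself is equivariant and pick the section to be the one making the formula hold); and that the definition is independent of the ordering of the product only up to the Chevalley-type commutator relations, which is acceptable since the lemma only asserts existence of $X_\alpha$ for \emph{some} ordering-compatible choice. Everything else is bookkeeping with the $\ZZ^N$-grading of Lemma~\ref{lem:T-P}.
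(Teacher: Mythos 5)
The paper offers no proof of this lemma at all: it is imported verbatim from \cite[Lemma 3.9]{St-serr} (resting in turn on \cite[Theorem 2]{PS}), so the citation is the entire justification. Your reconstruction is correct and follows essentially the same route as the cited source: realize $X_\alpha$ as an $S$-equivariant scheme section, defined over $R$ via Lemma~\ref{lem:T-P}, of the canonical projection $U_{\Psi}\to U_{\Psi}/U_{\Psi'}\cong W\bigl(\Lie(G)_\alpha\bigr)$ with $\Psi=\{i\alpha\}_{i\ge1}\cap\Phi_P$ and $\Psi'=\{i\alpha\}_{i\ge2}\cap\Phi_P$, and then read off $(*)$ and the homogeneity degrees of the $p^i_\theta$ from the $S$-weights after base change to a splitting extension. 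The only two points you treat lightly --- that the successive quotients of the filtration are genuinely vector groups canonically identified with $W\bigl(\Lie(G)_{i\alpha}\bigr)$, and that a section can be corrected to an $S$-equivariant one --- are exactly the standard inputs from \cite[Exp.~XXVI, \S 2]{SGA3} invoked in the reference, so nothing essential is missing.
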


\begin{defn}
Closed embeddings $X_\alpha$, $\alpha\in\Phi_P$, satisfying the statement of Lemma~\ref{lem:relschemes},
are called \emph{relative root subschemes of $G$ with respect to the parabolic subgroup $P$}.
\end{defn}

Relative root subschemes of $G$ with respect to $P$, actually,
depend on the choice of a Levi subgroup $L$ in $P$, but their essential properties stay the same,
so we usually omit $L$ from the notation.

We will use the following properties of relative root subschemes.

\begin{lem}\label{lem:rootels}\cite[Theorem 2, Lemma 6, Lemma 9]{PS}
Let $X_\alpha$, $\alpha\in\Phi_P$, be as in Lemma~\ref{lem:relschemes}.
Set $V_\alpha=\Lie(G)_\alpha$ for short. Then

(i) There exist degree $i$ homogeneous polynomial maps $q^i_\alpha:V_\alpha\oplus V_\alpha\to V_{i\alpha}$, i>1,
such that for any $R$-algebra $R'$ and for any
$v,w\in V_\alpha\otimes_R R'$ one has
\begin{equation}\label{eq:sum}
X_\alpha(v)X_\alpha(w)=X_\alpha(v+w)\prod_{i>1}X_{i\alpha}\left(q^i_\alpha(v,w)\right).
\end{equation}
%where each $q^i_\alpha\colon\WW(V_\alpha)\times_{\Spec R}\WW(V_\alpha)=\WW(V_\alpha\oplus V_\alpha)
%\to\WW(V_{i\alpha})$ is a homogeneous map of degree $i$.

(ii) For any $g\in L(R)$, there exist degree $i$ homogeneous polynomial maps
$\varphi^i_{g,\alpha}\colon V_\alpha\to V_{i\alpha}$, $i\ge 1$, such that for any $R$-algebra $R'$ and for any
$v\in V_\alpha\otimes_R R'$ one has
$$
gX_\alpha(v)g^{-1}=\prod_{i\ge 1}X_{i\alpha}\left(\varphi^i_{g,\alpha}(v)\right).
$$
%If $g\in S(R)$, then $\varphi^1_{g,\alpha}$ is multiplication by a scalar, and
%all $\varphi^i_{g,\alpha}$, $i>1$, are trivial.

(iii) \emph{(generalized Chevalley commutator formula)} For any $\alpha,\beta\in\Phi_P$
such that $m\alpha\neq -k\beta$ for all $m,k\ge 1$,
there exist polynomial maps
$$
N_{\alpha\beta ij}\colon V_\alpha\times V_\beta\to V_{i\alpha+j\beta},\ i,j>0,
$$
homogeneous of degree $i$ in the first variable and of degree $j$ in the second
variable, such that for any $R$-algebra $R'$ and for any
for any $u\in V_\alpha\otimes_R R'$, $v\in V_\beta\otimes_R R'$ one has
\begin{equation}\label{eq:Chev}
[X_\alpha(u),X_\beta(v)]=\prod_{i,j>0}X_{i\alpha+j\beta}\bigl(N_{\alpha\beta ij}(u,v)\bigr)
\end{equation}

(iv) For any subset $\Psi\subseteq X^*(S)\setminus\{0\}$ that is closed under addition,
the morphism
$$
X_\Psi\colon W\Bigl(\,\bigoplus_{\alpha\in\Psi}V_\alpha\Bigr)\to U_\Psi,\qquad
(v_\alpha)_\alpha\mapsto\prod_\alpha X_\alpha(v_\alpha),
$$
where the product is taken in any fixed order,
is an isomorphism of schemes.
\end{lem}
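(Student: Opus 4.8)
My plan is to deduce all four statements by faithfully flat descent from the split Chevalley group, using the characterization~\textbf{($*$)} of the relative root subschemes. Since $G$ is reductive over $R$, there is a faithfully flat — in fact \'etale — extension $R'/R$ for which $G_{R'}$ is split with respect to a maximal split torus $T\subseteq L_{R'}$; over $R'$ condition~\textbf{($*$)} of Lemma~\ref{lem:relschemes} applies, so each $X_\alpha$ becomes the explicit product~\eqref{eq:Xalpha-prod} of Chevalley root morphisms $x_\delta$ ($\delta\in\pi^{-1}(\alpha)$), corrected by $x_\theta$-terms with $\theta\in\pi^{-1}(i\alpha)$, $i\ge2$. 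All four assertions — three identities of $R$-morphisms and one claim that an $R$-morphism is an isomorphism — may be checked after the base change $R\to R'$; so everything reduces to a computation inside $G_{R'}$ followed by a descent argument.

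I would prove (iv) first, since it supplies the coordinates used in the other parts. When $\Psi\subseteq X^*(S)\setminus\{0\}$ is closed under addition, the set $\pi^{-1}(\Psi)\subseteq\Phi$ is a closed set of roots, so by~\cite[Exp.~XXII]{SGA3} the group $U_{\pi^{-1}(\Psi)}$, which is $U_\Psi$ after base change, is the direct product — in any chosen order — of the subgroups $x_\delta(\Ga)$. Expanding $X_\Psi=\prod_{\alpha\in\Psi}X_\alpha$ by~\eqref{eq:Xalpha-prod}, and noting that every index $\theta$ that appears again lies in $\pi^{-1}(\Psi)$ because $\Psi$ is closed under addition, one sees that $X_\Psi$ differs from the standard product $\prod_{\delta\in\pi^{-1}(\Psi)}x_\delta$ only by a reordering and by correction terms that are triangular-unipotent for the height of relative roots; hence $X_\Psi$ is an isomorphism onto $U_\Psi$. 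Since both $U_\Psi$ and the morphism $X_\Psi$ are defined over $R$, the isomorphism descends.

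For (i), (ii), (iii) I would expand every occurrence of $X_\gamma$ by~\eqref{eq:Xalpha-prod} and manipulate the resulting words in the $x_\delta$ using $x_\delta(a)x_\delta(b)=x_\delta(a+b)$ and the classical Chevalley commutator formula. In (i) the $\pi^{-1}(\alpha)$-parts of $X_\alpha(v)$ and $X_\alpha(w)$ merge additively into the $\pi^{-1}(\alpha)$-part of $X_\alpha(v+w)$, and every commutator produced along the way is a product of $x_\theta$'s with $\theta\in\pi^{-1}(i\alpha)$, $i\ge2$; applying (iv) with $\Psi=\{i\alpha:i\ge2\}\cap\Phi_P$ rewrites the leftover factor as $\prod_{i>1}X_{i\alpha}\bigl(q^i_\alpha(v,w)\bigr)$. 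In (ii), an element $g\in L(R)=\Cent_G(S)(R)$ normalizes $U_\Psi$ for $\Psi=\ZZ_{\ge1}\alpha\cap\Phi_P$ — conjugating a root subgroup by a Levi element changes the $\pi$-image only up to positive multiples — so $gX_\alpha(v)g^{-1}\in U_\Psi$ and (iv) presents it as $\prod_{i\ge1}X_{i\alpha}\bigl(\varphi^i_{g,\alpha}(v)\bigr)$. In (iii), the hypothesis $m\alpha\neq-k\beta$ forces $i\alpha+j\beta\neq0$ for all $i,j\ge1$, so $[X_\alpha(u),X_\beta(v)]$ lies in $U_\Psi$ for $\Psi=\{i\alpha+j\beta:i,j\ge1\}\cap\Phi_P$, and the Chevalley expansion regrouped according to the $\pi$-image of each generator yields~\eqref{eq:Chev}.

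It remains to check that the maps $q^i_\alpha$, $\varphi^i_{g,\alpha}$, $N_{\alpha\beta ij}$ thus produced are homogeneous polynomial maps of the asserted (multi)degrees and are defined over $R$. In each case the map is obtained over $R'$ as $X_\Psi^{-1}$ composed with the pertinent $R$-morphism $W(\cdots)\to U_\Psi$; but ``factoring through the closed subscheme $U_\Psi$'' can be tested fppf-locally and $X_\Psi$ is already an $R$-isomorphism, so each composite is in fact an $R$-morphism and no separate descent is needed. Homogeneity is forced by the $S$-equivariance of the $X_\alpha$: the component landing in $V_{i\alpha}$ (respectively $V_{i\alpha+j\beta}$) must carry $S$-weight $i\alpha$ (respectively $i\alpha+j\beta$), which pins down the degree (respectively the bidegree) of a morphism of the associated affine spaces. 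The only real obstacle is the combinatorial bookkeeping inside the Chevalley-formula computations of the previous paragraph — keeping track of exactly which root subgroups $x_\theta$ occur, verifying that their $\pi$-images are the claimed positive combinations, and checking that the outcome is independent of the various fixed orderings, which as usual reduces to the fact that transposing two adjacent factors only introduces higher-height commutators already accounted for. These verifications are carried out in detail in~\cite{PS}.
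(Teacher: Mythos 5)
The paper gives no proof of this lemma --- it is quoted verbatim from \cite[Theorem 2, Lemma 6, Lemma 9]{PS} --- and your sketch reconstructs exactly the strategy of that source: reduce by fppf descent to the split form, where condition ($*$) turns each $X_\alpha$ into an explicit product of Chevalley root elements, prove (iv) via the closed-set-of-roots product decomposition, and obtain (i)--(iii) by the classical Chevalley commutator calculus together with $S$-equivariance to pin down degrees and definedness over $R$. Your argument is correct and is essentially the same as the one the paper relies on.
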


\begin{lem}\label{lem:EPgen}
In the notation of Lemma~\ref{lem:relroots}, let $\Phi^\pm$ be the set of positive and negative roots
such that $D\subseteq\Phi^+$. Set $\Phi_P^\pm=\pi(\Phi^\pm)\setminus\{0\}$, $P^+=P$,
and let $P^-$ be the opposite parabolic subgroup
to $P$ such that $P\cap P^-=L$. Then for any $R$-algebra $R'$, one has
$$
U_{P^\pm}(R')=\left<X_\alpha(R'\otimes_R V_\alpha),\ \alpha\in\Phi_P^\pm\right>.
$$
Consequently,
$$
E_P(R')=\left<X_\alpha(R'\otimes_R V_\alpha),\ \alpha\in\Phi_P\right>.
$$
\end{lem}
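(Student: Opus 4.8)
The plan is to deduce the statement from Lemma~\ref{lem:T-P} and Lemma~\ref{lem:rootels}(iv), after identifying the unipotent radicals $U_{P^\pm}$ with the subgroups $U_{\Phi_P^\pm}$ attached to the relative root system $\Phi_P=\Phi(S,G)$.

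First I would set up the bookkeeping of orderings. Fix the total ordering on $\ZZ\Phi\cong X^*(T)$ for which $D\subseteq\Phi^+$ and $P_{\overline{k(s)}}$ is the standard positive parabolic attached to $D$, as in Lemma~\ref{lem:relroots}. The projection $\pi$ of~\eqref{eq:T-S} transports this ordering to an ordering on $\ZZ\Phi_P\cong X^*(S)\otimes\QQ$ for which $\Phi_P^+=\pi(\Phi^+)\setminus\{0\}$ and $\Phi_P^-=\pi(\Phi^-)\setminus\{0\}=-\Phi_P^+$ are exactly the positive and negative relative roots, so that $\Phi_P=\Phi_P^+\sqcup\Phi_P^-$. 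In particular $0\notin\Phi_P^\pm$ and each of $\Phi_P^+$, $\Phi_P^-$ is closed under addition of characters in the sense of Lemmas~\ref{lem:T-P} and~\ref{lem:rootels}, the positive part of a root system always being a closed subset. By Lemma~\ref{lem:T-P}(i) and the discussion following the definition of $\Phi(S,G)$, the groups $P^+=P=U_{\Phi_P^+\cup\{0\}}$ and $P^-=U_{\Phi_P^-\cup\{0\}}$ are the two opposite parabolic subgroups with common Levi $\Cent_G(S)$, and $\Cent_G(S)=L$ because $S$ is the maximal split subtorus of $\Cent(\bar L)$. Consequently $\Lie(U_{P^\pm})=\bigoplus_{\alpha\in\Phi_P^\pm}\Lie(G)_\alpha$, and since $U_{P^\pm}$ is a smooth connected unipotent closed subgroup normalized by $L$, the uniqueness in Lemma~\ref{lem:T-P}(ii) forces $U_{P^\pm}=U_{\Phi_P^\pm}$.

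Next I would apply Lemma~\ref{lem:rootels}(iv) to $\Psi=\Phi_P^+$ and to $\Psi=\Phi_P^-$, with any fixed order chosen on these sets: the morphisms
\[
X_{\Phi_P^\pm}\colon W\Bigl(\bigoplus_{\alpha\in\Phi_P^\pm}V_\alpha\Bigr)\longrightarrow U_{\Phi_P^\pm}=U_{P^\pm},\qquad (v_\alpha)_\alpha\mapsto\prod_\alpha X_\alpha(v_\alpha),
\]
are isomorphisms of $R$-schemes. Passing to $R'$-points, each element of $U_{P^\pm}(R')$ is a product $\prod_{\alpha\in\Phi_P^\pm}X_\alpha(v_\alpha)$ with $v_\alpha\in R'\otimes_R V_\alpha$, whence $U_{P^\pm}(R')\subseteq\left<X_\alpha(R'\otimes_R V_\alpha),\ \alpha\in\Phi_P^\pm\right>$. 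For the reverse inclusion, observe that for a single $\alpha\in\Phi_P^\pm$ the map $X_\alpha$ is the composite of the coordinate inclusion $W(V_\alpha)\hookrightarrow W\bigl(\bigoplus_{\beta\in\Phi_P^\pm}V_\beta\bigr)$ with the isomorphism $X_{\Phi_P^\pm}$ (using $X_\beta(0)=1$), so $X_\alpha(R'\otimes_R V_\alpha)\subseteq U_{P^\pm}(R')$; alternatively this is visible from the product formula~\eqref{eq:Xalpha-prod}, which writes $X_\alpha(u)$ as a product of absolute root subgroup elements $x_\delta(\cdot)$ with $\pi(\delta)$ a positive (resp. negative) integer multiple of $\alpha$. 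Since $U_{P^\pm}(R')$ is a subgroup of $G(R')$, the subgroup generated by these elements lies inside it, which gives the reverse inclusion and hence $U_{P^\pm}(R')=\left<X_\alpha(R'\otimes_R V_\alpha),\ \alpha\in\Phi_P^\pm\right>$.

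For the last assertion, $E_P(R')$ is by Definition~\ref{defn:E_P} the subgroup of $G(R')$ generated by $U_{P^+}(R')$ and $U_{P^-}(R')$; combining the two descriptions obtained above with the equality $\Phi_P=\Phi_P^+\cup\Phi_P^-$ yields $E_P(R')=\left<X_\alpha(R'\otimes_R V_\alpha),\ \alpha\in\Phi_P\right>$. The only point in this argument that is not purely formal is the first step: one must check that the decomposition of $\Phi_P$ coming from $\pi(\Phi^+)$ and $\pi(\Phi^-)$ genuinely is the positive/negative decomposition for an ordering on $X^*(S)$, so that $\Phi_P^\pm$ are admissible subsets in Lemmas~\ref{lem:T-P} and~\ref{lem:rootels}(iv), together with the identification $U_{P^\pm}=U_{\Phi_P^\pm}$; granting these, the rest is immediate.
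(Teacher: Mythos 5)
Your proof is correct and follows essentially the same route as the paper: both arguments reduce the statement to Lemma~\ref{lem:rootels}(iv) after identifying $U_{P^\pm}$ with the subgroups attached to the relative root subsets $\Phi_P^\pm$. The only cosmetic difference is that the paper makes this identification directly via the correspondence between parabolics and closed sets of absolute roots (namely $P\leftrightarrow\Psi=\Phi^+\cup(\ZZ(D\setminus J)\cap\Phi^-)$ and $U_{P^\pm}\leftrightarrow\Phi^\pm\setminus\ZZ(D\setminus J)$, with $\pi$ of the latter equal to $\Phi_P^\pm$), whereas you route the same computation through the uniqueness clause of Lemma~\ref{lem:T-P}(ii); your extra care with the reverse inclusion, left implicit in the paper, is a welcome addition.
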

\begin{proof}
By the choice of $D$ the parabolic subgroup $P_{\overline{k(s)}}$ is the standard positive parabolic subgroup
of $G_{\overline{k(s)}}$ corresponding to a closed set of roots $\Psi\supseteq\Phi^+$. By the choice of
$J\subseteq D$, one has
$$
\Psi=\Phi^+\cup\bigl(\ZZ(D\setminus J)\cap\Phi^-\bigr).
$$
Then, clearly, $\pi(\Psi)=\Phi_P^+\cup\{0\}$. Similarly, $P^-$ corresponds to the set $(-\Psi)$ and
$\pi(-\Psi)=\Phi_P^-\cup\{0\}$. Then the unipotent radicals $U_{P^\pm}$ correspond
to the closed unipotent subsets
$$
\pi\bigl(\Phi^\pm\setminus \ZZ(D\setminus J)\bigr)=\Phi_P^\pm\subseteq \Phi_P.
$$
Then Lemma~\ref{lem:rootels} (iv) finishes the proof.

\end{proof}

\section{Main Theorem}

All commutative rings are assumed to be unital.
For any commutative ring $R$ and $n\ge 3$, we denote by $E_n(R)$ the usual elementary subgroup of $\GL_n(R)$.

%\begin{lem}
%Let $G$ be a reductive group scheme over a connected ring $R$, and let $P^\pm$ be two opposite parabolic
%$R$-subgroups of $G$, and let $L=P^+\cap P^-$ be their common Levi subgroup, and $U_{P^\pm}$ their unipotent radical.
%There is a closed embedding $\chi:\Gm_{,R}\to G^{\ad}$, $t\mapsto\chi_t$, over $R$, such that for
%$t=x\in\Gm_{,R}(R[x^{\pm 1}])$ one has

%1) $\chi_t(U_{P^+}(R))\subseteq G(R[x],xR[x])$,

%2) $\chi_t(U_{P^-}(R))\subseteq G(R[x^{-1}],x^{-1}R[x^{-1}])$,

%$\chi_t(X_\alpha(u))=X_\alpha(t^{m_i(\alpha)}\cdot u)$ for any $\alpha\in\Phi_P$, $u\in V_\alpha\otimes_R R'$;

%3) $\chi_t|_L=\id_L$.
%\end{lem}

\begin{lem}\label{lem:EEn}
Let $R$ be a commutative ring, Let $G$ be a reductive group scheme over $R$, and let $i:G\to\GL_{n,R}$
be a closed embedding of $G$ as a closed $R$-subgroup, where $n\ge 3$.
Assume that $G$ contains a non-central 1-dimensional subtorus $H\cong\Gm_{,R}$, and let $P=P^+$ and
$P^-$ be the corresponding two opposite parabolic subgroups constructed as in Lemma~\ref{lem:T-P}.
 Then one has $E_P(R)\le E_n(R)$.
\end{lem}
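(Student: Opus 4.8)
The plan is to transport the $H$-action up to the ambient group $\GL_{n,R}$, to place $U_P$ and $U_{P^-}$ inside the unipotent radicals of the two opposite parabolic subgroups of $\GL_{n,R}$ cut out by $H$, and then to recognize the latter as ``block triangular unipotent'' subgroups, which are directly controlled by $E_n(R)$. First I would observe that, by Definition~\ref{defn:E_P}, $E_P(R)=\langle U_P(R),\,U_{P^-}(R)\rangle$, so it is enough to show $U_P(R)\subseteq E_n(R)$ and $U_{P^-}(R)\subseteq E_n(R)$; and by Lemma~\ref{lem:rootels}(iv) (equivalently Lemma~\ref{lem:EPgen}) each of these groups is generated by the subsets $X_\alpha\bigl(\Lie(G)_\alpha\otimes_RR\bigr)$, with $\alpha$ ranging over the positive, resp.\ negative, relative roots of $\Phi_P=\Phi(H,G)$. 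Hence it suffices to prove that $X_\alpha(v)\in E_n(R)$ for every such $\alpha$ and every $v\in\Lie(G)_\alpha\otimes_RR$.

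Next I would use that, since $H\subseteq G\subseteq\GL_{n,R}$, conjugation makes $H$ act by $R$-group automorphisms on $\GL_{n,R}$; the standard module then splits into $H$-weight submodules $R^n=\bigoplus_{k\in\ZZ}V_k$, each a finitely generated projective direct summand, with finitely many nonzero and at least two nonzero --- for a $1$-dimensional subtorus of $\GL_{n,R}$ acting on $R^n$ through a single weight must coincide with the central torus of $\GL_{n,R}$, and would then centralize $G$, contradicting the non-centrality of $H$ in $G$. Applying Lemma~\ref{lem:T-P} to this action of $H$ on $\GL_{n,R}$ produces two opposite proper parabolic subgroups $\mathcal Q^+,\mathcal Q^-$ of $\GL_{n,R}$ with common Levi $\Cent_{\GL_{n,R}}(H)$, whose unipotent radicals are
\[
U_{\mathcal Q^{\pm}}(R)=\Bigl\{\,g\in\GL_n(R)\ :\ (g-1)\,V_k\subseteq\textstyle\bigoplus_{\pm l>\pm k}V_l\ \text{ for all }k\,\Bigr\}
\]
--- the block strictly upper, resp.\ lower, triangular unipotent matrices with respect to the weight filtration of $R^n$.

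The key step I would then carry out is the following. Fix a positive relative root $\alpha$ and $v\in\Lie(G)_\alpha\otimes_RR$. By Lemma~\ref{lem:relschemes} the relative root subscheme $X_\alpha$ is equivariant for the split torus $S$ of Lemma~\ref{lem:relroots}, which contains the image of $H$; thus $X_\alpha$ is $H$-equivariant, $H$ acting on $W\bigl(\Lie(G)_\alpha\bigr)$ through a positive character --- of weight $m>0$ say, positive because $\Lie(G)_\alpha\subseteq\Lie(U_P)$ and $U_P$ has only strictly positive $H$-weights --- and on $G$ by conjugation. Composing with the $H$-equivariant morphism $\GL_{n,R}\to W\bigl(\Lie(\GL_{n,R})\bigr)$, $g\mapsto g-1$, and using $X_\alpha(0)=1$, one sees that the weight-$k$ homogeneous component of $X_\alpha(v)-1$ is a homogeneous polynomial map in $v$ of degree $k/m$, hence vanishes unless $k\in\{m,2m,\dots\}$; so $X_\alpha(v)-1$ is supported in strictly positive weights and $X_\alpha(v)\in U_{\mathcal Q^+}(R)$, and symmetrically $X_\alpha(v)\in U_{\mathcal Q^-}(R)$ for $\alpha$ negative. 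Together with the first paragraph this gives $U_P(R)\subseteq U_{\mathcal Q^+}(R)$ and $U_{P^-}(R)\subseteq U_{\mathcal Q^-}(R)$, so $E_P(R)\subseteq\bigl\langle U_{\mathcal Q^+}(R),\,U_{\mathcal Q^-}(R)\bigr\rangle$.

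Finally I would show $\bigl\langle U_{\mathcal Q^+}(R),\,U_{\mathcal Q^-}(R)\bigr\rangle\subseteq E_n(R)$, i.e.\ that every block-triangular unipotent matrix with respect to $R^n=\bigoplus_kV_k$ lies in $E_n(R)$. If all the $V_k$ are free --- which holds automatically when $R$ is semilocal, the situation needed for Theorem~\ref{th:main} --- one chooses a basis of $R^n$ adapted to the decomposition and ordered by weight, and then $U_{\mathcal Q^+}(R)$ (resp.\ $U_{\mathcal Q^-}(R)$) is generated by the elementary matrices $e_{ij}(r)$ with $i$ in an earlier (resp.\ later) block than $j$, so it lies in $E_n(R)$. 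For a general commutative ring $R$ I would argue along the lower central series of $U_{\mathcal Q^+}$, using the commutator identities, to reduce to the assertion that $1+\varphi\in E_n(R)$ whenever $V_i,V_j$ are complementary direct summands of $R^n$ and $\varphi\in\Hom_R(V_j,V_i)$ is regarded as an endomorphism of $R^n$; this is a standard property of $E_n(R)$ for $n\ge 3$ --- it amounts to the independence of $E_n(R)$ of the chosen parabolic subgroup of $\GL_{n,R}$ --- proved by a Whitehead-type matrix identity. I expect this last point to be the main obstacle: over a base where the weight submodules are non-free projectives a block-triangular unipotent matrix is not visibly a product of elementary matrices, and controlling it inside $\GL_n$ (rather than in a stabilized $\GL_N$) needs the $n\ge 3$ form of the Whitehead lemma; by contrast the equivariance step above, which is the conceptual heart, and the semilocal case are both routine.
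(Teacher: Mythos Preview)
Your overall strategy coincides with the paper's: introduce the opposite parabolics $\mathcal Q^{\pm}$ of $\GL_{n,R}$ determined by $H$, prove $U_{P^{\pm}}(R)\subseteq U_{\mathcal Q^{\pm}}(R)$, and then conclude via $U_{\mathcal Q^{\pm}}(R)\subseteq E_n(R)$. The difference lies in how the inclusion $U_P(R)\subseteq U_{\mathcal Q^+}(R)$ is obtained. You deduce it directly from the $H$-equivariance of the relative root embeddings $X_\alpha$ together with degree-versus-weight bookkeeping for the polynomial map $v\mapsto X_\alpha(v)-1$ into $M_n(R)$; this is clean and works over any connected base (hence componentwise over any $R$). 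The paper instead localizes at each maximal ideal, invokes the field case via~\cite[Prop.~2.8.3(3)]{CGP} to place $g$ in the big cell $U_{Q^+}MU_{Q^-}$ of $\GL_n$, and then runs a Laurent-polynomial argument (conjugating by the generic point $Z\in H(R_m[Z^{\pm 1}])$) to isolate the $U_{Q^+}$-component. Your route is shorter and avoids both the big-cell trick and the external reference; the paper's route makes the dynamical role of the cocharacter explicit but is more circuitous. One small point: the paper passes to a local (hence connected) ring before invoking the torus $S$ of Lemma~\ref{lem:relroots}; your argument uses that lemma globally, so you should say explicitly that you work on connected components of $\Spec R$.

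On the final step $\langle U_{\mathcal Q^+}(R),U_{\mathcal Q^-}(R)\rangle\subseteq E_n(R)$: you are right that this is the one place requiring care when the weight summands $V_k$ are not free. The paper dispatches it with ``Clearly, this implies the claim,'' so your explicit discussion is an improvement rather than a gap. For the application to Theorem~\ref{th:main} the torus $H$ is defined over a semilocal base and your adapted-basis argument suffices; the general commutative ring case, as you note, rests on the Whitehead-type identity expressing independence of $E_{\mathcal Q}(R)$ from the chosen proper parabolic of $\GL_n$ for $n\ge 3$.
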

\begin{proof}
%Let $S\subseteq G$ be
%a non-central closed $R$-subgroup scheme isomorphic to $\Gm$, such that if $\Lie(G)=\bigoplus_{i\in\ZZ}\Lie(G)_i$
%be the grading induced by the adjoint action of $S$, then $P=P^+$ is the unique parabolic
%subgroup of $G$ such that $\Lie(P^+)=\bigoplus_{i\ge 0}\Lie(G)_i$.
Let $Q=Q^+$ and $Q^-$ be the two parabolic $R$-subgroups
of $\GL_{n,R}$ corresponding to $H\le G\le\GL_{n,R}$, and let $M=\Cent_{\GL_{n,R}}(T)$ be their
common Levi subgroup. We show that $U_P(R)\le U_Q(R)$. Clearly, this implies the claim of the lemma.
By~\cite[Proposition 2.8.3(3)]{CGP} this is true if $R$ is a field. In general, take $g\in U_P(R)$.
It is enough to show that $g\in U_Q(R_m)$ for any maximal localization $R_m$ of $R$. Let
$$
\rho:R_m\to R_m/mR_m=l
$$
be the residue homomorphism. By the above $\rho^*(g)\in U_Q(l)$.
Recall that $\Omega_Q=U_{Q^+}MU_{Q^-}\cong U_{Q^+}\times M\times U_{Q^-}$ is an open subscheme of
$\GL_{n,R}$~\cite[Exp. XXVI, Remarque 4.3.6]{SGA3}. Hence
\begin{equation}\label{eq:gOmegaQ}
g\in U_{Q^+}(R_m)M(R_m)U_{Q^-}(R_m).
\end{equation}

Let $L=P\cap P^-=\Cent_G(H)$ be the Levi subgroup of $P$ and $P^-$.
Let $\bar H\subseteq G^{\ad}$ be the image of $H$ under the natural homomorphism $G\to G^{\ad}$. Clearly,
$\bar H\cong \Gm_{,R_m}$ is a split subtorus of the center of the image $\bar L$ of $L$ in $G^{\ad}$. Let
$S\le\Cent(\bar L_{R_m})$ be the split torus constructed in Lemma~\ref{lem:relroots} (applied to the connected ring $R_m$).
Then $\bar H_{R_m}\le S$. The embeddings $X_\alpha$, $\alpha\in\Phi(S,G_{R_m})$, are $S$-equivariant,
hence they are $\bar H_{R_m}$-equivariant. Since $H\le L$ preserves the subschemes $U_{P^\pm}$,
and $\Cent(G)\le L$, this implies that the embeddings $X_\alpha$ are $H_{R_m}$-equivariant.

By definition of $P=P^+$ and $P^-$, there is an isomorphism $X^*(H)\cong \ZZ$ such that
$$
\Lie(U_{P})=\bigoplus_{n>0}\Lie(G)_n\quad\mbox{and}\quad\Lie(U_{P^-})=\bigoplus_{n<0}\Lie(G)_n.
$$
Since the embeddings $X_\alpha$, $\alpha\in\Phi_P$, are $H_{R_m}$-equivariant, for any $R_m$-algebra $R'$, any $s\in H(R')$,
and any $u\in R'\otimes_{R_m} V_\alpha=R'\otimes_{R_m}\Lie(G)_\alpha$ we have
$$
sX_\alpha(u)s^{-1}=X_\alpha(s(u)).
$$
By Lemma~\ref{lem:EPgen} for any $\alpha\in\Phi_P^+$ we have $u\in\Lie(U_P)(R')$, hence $s(u)=s^nu$ for some $n=n(u)>0$.
Similarly, $\alpha\in\Phi_P^-$ we have $u\in\Lie(U_{P^-})(R')$, hence $s(u)=s^{-n}u$ for some $n=n(u)>0$.

Applying this result to the ring of Laurent polynomials $R'=R_m[Z^{\pm}]$ and $s=Z\in H(R')$, we conclude that
\begin{equation}\label{eq:sUL}
\begin{array}{l}
sU_{P}(R_m)s^{-1}\subseteq U_P(R_m[Z],ZR_m[Z]);\\
sU_{P^-}(R_m)s^{-1}\subseteq U_{P^-}(R_m[Z^{-1}],Z^{-1}R_m[Z^{-1}]);\\
s|_{L(R_m)}=\id.
\end{array}
\end{equation}
In particular, one has
\begin{equation}\label{eq:sg}
sgs^{-1}\in U_P(R_m[Z],ZR_m[Z])\le G(R_m[Z],ZR_m[Z])\le\GL_n(R_m[Z],ZR_m[Z]).
\end{equation}

On the other hand, the analogs of~\eqref{eq:sUL} hold for $\GL_n$, $U_{Q^\pm}$, and $M$ in place of
$G$, $U_{P^\pm}$, and $L$. Therefore by~\eqref{eq:gOmegaQ} we have
$$
sgs^{-1}\in U_{Q^+}(R_m[Z],ZR_m[Z])\cdot M(R_m)\cdot U_{Q^-}(R_m[Z^{-1}],Z^{-1}R_m[Z^{-1}]).
$$
Since one has
$$
\begin{array}{l}
U_{Q^+}(R_m[Z],ZR_m[Z])\cdot M(R_m)\cdot U_{Q^-}(R_m[Z^{-1}],Z^{-1}R_m[Z^{-1}])\cap
\GL_n(R_m[Z],ZR_m[Z])\\
=U_{Q^+}(R_m[Z],ZR_m[Z]),
\end{array}
$$
we conclude that $sgs^{-1}\in U_{Q^+}(R_m[Z]),ZR_m[Z])$ and thus $g\in U_{Q^+}(R_m)$, as required.

\end{proof}

%\begin{lem}\cite[Lemma 2.4 (i)]{Vo}\label{lem:vorst2.4}
%Let $A$ be an arbitrary commutative ring, $B\subseteq A$ be a commutative subring of $A$, and $h\in B$
%be a non-nilpotent element such that $Ah+B=A$. Let $n\ge 3$. Then for any $\alpha\in E_n(A_h)$ there are
%$\beta\in E_n(B_h)$ and $\gamma\in E_n(A)$ such that $\alpha=\beta\gamma$.
%\end{lem}

\begin{lem}\label{lem:Nis_loc_glob}
Let $G$ be an isotropic reductive group scheme over a connected Noetherian commutative ring $B$, provided with a closed $B$-embedding
$G\to\GL_{n,B}$, $n\ge 3$, which is a $B$-group scheme homomorphism.
%Noetherian needed for Moser's lemma!!! otherwise embedding of finite presentation?
Assume that $G$ contains a non-central 1-dimensional split subtorus $\Gm_{,B}$, and let $P=P^+$ and $P^-$
be the corresponding pair of opposite parabolic subgroups that exist by Lemma~\ref{lem:T-P}.
Assume moreover that $B$ is a subring of a commutative ring $A$, and let
$h\in B$ be a non-nilpotent element. Denote by $F_h:G(A)\to G(A_h)$ the localization homomorphism.

If $Ah+B=A$, i.e. the natural map $B\to A/Ah$ is surjective, then for any $x\in E_P(A_h)$ there exist $y\in G(A)$ and $z\in E_P(B_h)$ such that
$x=F_h(y)z$.

%(ii) If moreover $Ah\cap B=Bh$, i.e. $B/Bh\to A/Ah$ is an isomorphism, and $h$ is not a zero divisor in $A$, then
%the sequence of pointed sets
%$$
%K_1^G(B)\xrightarrow{\st g\mapsto (F_h(g),g)} K_1^G(B_h)\times K_1^G(A)\xrightarrow{\st (g_1,g_2)\mapsto g_1F_h(g_2)^{-1}}
%K_1^G(A_h)
%$$
%is exact.
%%the inclusion of $B$ into $A$ induces a surjective map
%%$$
%%\ker(K_1^G(B)\to K_1^G(B_h))\to\ker(K_1^G(A)\to K_1^G(A_f)).
%%$$
%%the
%%sequence
%%$$
%%K_1^G(B)\stackrel{i}{\to} K_1^G(A)\stackrel{j}{\to} K_1^G(A_h)
%%$$
%%satisfies $\ker j\subseteq \im i$.
\end{lem}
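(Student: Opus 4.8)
The plan is to reduce everything to manipulations with relative root subschemes. By Lemma~\ref{lem:EPgen} we may write $x=\prod_{j=1}^{k}X_{\alpha_j}(u_j)$ with $\alpha_j\in\Phi_P$ and $u_j\in A_h\otimes_BV_{\alpha_j}$. The hypothesis $Ah+B=A$ gives, for any finitely generated projective $B$-module $V$ and any $l\ge1$, the equality $A\otimes_BV=V+h^{l}(A\otimes_BV)$ inside $A\otimes_BV$; applied to $V=V_\alpha$ (and after clearing a denominator) it shows that every $u\in A_h\otimes_BV_\alpha$ splits as $u=v+w$ with $v$ in the image of $A\otimes_BV_\alpha$ and $w$ in the image of $B_h\otimes_BV_\alpha$. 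The addition formula (Lemma~\ref{lem:rootels}(i)) then turns such a splitting into
\[
X_\alpha(u)=X_\alpha(v)\cdot X_\alpha(w)\cdot\Bigl(\prod_{i>1}X_{i\alpha}\bigl(q^i_\alpha(v,w)\bigr)\Bigr)^{-1},
\]
where $X_\alpha(v)\in F_h(G(A))$, $X_\alpha(w)\in E_P(B_h)$, and the last factor is supported on the strictly higher relative roots $i\alpha$, $i\ge2$. Since relative root heights are bounded, repeated use of this identity terminates; these are the basic moves.

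First I handle $x\in U_{P^\pm}(A_h)$. For $l\ge1$ let $U^{\ge l}$ be the closed subgroup of $U_{P^+}$ generated by the $X_\alpha$ with $\alpha\in\Phi_P^+$ of height $\ge l$ (and, symmetrically, inside $U_{P^-}$); by Lemma~\ref{lem:rootels}(iii)--(iv) it is normal in $U_{P^+}$ and $U^{\ge l}/U^{\ge l+1}$ is a commutative vector group. I claim, by descending induction on $l$ (the base case $l\gg0$ being trivial since then $U^{\ge l}=1$), that every $g\in U^{\ge l}(A_h)$ can be written $g=F_h(y)z$ with $y\in U^{\ge l}(A)$ and $z\in U^{\ge l}(B_h)$. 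Indeed, writing $g$ by Lemma~\ref{lem:rootels}(iv) with the height-$l$ factors first and splitting each of their parameters as above, commutativity of $U^{\ge l}/U^{\ge l+1}$ lets us separate the $A$- and $B_h$-parts, so that $g=F_h(y_0)\,z_0\,c$ with $y_0\in U^{\ge l}(A)$, $z_0\in U^{\ge l}(B_h)$, $c\in U^{\ge l+1}(A_h)$. Putting $c':=z_0cz_0^{-1}\in U^{\ge l+1}(A_h)$ (normality) gives $g=F_h(y_0)\,c'\,z_0$; applying the inductive hypothesis to $c'$ yields $c'=F_h(y_1)z_1$, and hence $g=F_h(y_0y_1)(z_1z_0)$, as desired. (The mirror argument also gives $g=z\,F_h(y)$.)

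For general $x\in E_P(A_h)=\langle U_P(A_h),\,U_{P^-}(A_h)\rangle$ put $\mathcal S:=F_h(G(A))\cdot E_P(B_h)\subseteq G(A_h)$. Then $1\in\mathcal S$, $\mathcal S$ is stable under left multiplication by $F_h(G(A))$, and by the previous paragraph $\mathcal S\supseteq U_{P^\pm}(A_h)$; so it is enough to show that $\mathcal S$ is stable under left multiplication by each $X_\beta(u)$ with $\beta\in\Phi_P$, $u\in A_h\otimes_BV_\beta$ (for then, since these generate $E_P(A_h)$, we get $E_P(A_h)\subseteq\mathcal S$, which is the lemma), and the basic moves of the first paragraph together with the boundedness of heights reduce this to the single assertion
\[
X_\beta(w)\,F_h(y)\in\mathcal S\qquad\text{for }\ \beta\in\Phi_P,\ \ w\in B_h\otimes_BV_\beta,\ \ y\in G(A).
\]
To prove it, write $w=b/h^{k}$ with $b\in V_\beta=B\otimes_BV_\beta$; since $X_\beta$ is equivariant for the action of the split torus $H\cong\Gm_{,B}$ defining $P^\pm$ (as in the proof of Lemma~\ref{lem:EEn}), one has $X_\beta(w)=s\,F_h(X_\beta(v))\,s^{-1}$ for a suitable power $s=h^{\pm k'}\in H(B_h)$ and some $v\in V_\beta$, and one then combines the $H$-action with the unipotent case of the previous paragraph (and Lemma~\ref{lem:rootels}(ii)) to move $F_h(y)$ past $X_\beta(w)$.

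I expect this last step to be the main obstacle: the point is that $F_h(G(A))$ (and likewise $H(B_h)$) does \emph{not} normalise $E_P(B_h)$ inside $G(A_h)$, so the decomposition of $x$ as a word in $U_{P^\pm}(A_h)$ with factors of the form $F_h(y_i)z_i$ cannot be collapsed to a single $F_h(y)z$ by any purely formal rearrangement; one must genuinely exploit the rescaling action of the torus $H$. An alternative is a dilation: replace $x$ by the path $x(T)=\prod_jX_{\alpha_j}(Tu_j)\in E_P(A_h[T])$ with $x(0)=1$, $x(1)=x$; for $N$ large the substitution $T\mapsto h^{N}T$ carries $x(T)$ into $F_h\bigl(E_P(A[T])\bigr)$, and one bridges back to $T=1$ using $Ah+B=A$. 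The remaining work is routine bookkeeping with the identities of Lemma~\ref{lem:rootels} and the finiteness of relative root heights.
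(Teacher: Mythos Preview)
Your setup with relative root subschemes and the splitting $u=v+w$ via $Ah^N+B=A$ is the right opening, and your treatment of $U_{P^\pm}(A_h)$ by the height filtration is fine (though the paper does not separate this case). The gap is exactly where you flag it: the assertion $X_\beta(w)\,F_h(y)\in\mathcal S$ for $w\in B_h\otimes_B V_\beta$ and $y\in G(A)$ is equivalent to asking that conjugation by an \emph{arbitrary} element of $F_h(G(A))$ carry $E_P(B_h)$ into $\mathcal S$, and neither of your two sketches establishes this. For the torus idea, writing $X_\beta(w)=s\,F_h(X_\beta(v))\,s^{-1}$ with $s\in H(B_h)$ gains nothing, since $s$ lies in neither $F_h(G(A))$ nor $E_P(B_h)$, and conjugating $F_h(y)$ by $s$ does not keep it in $F_h(G(A))$; Lemma~\ref{lem:rootels}(ii) only controls conjugation by $L(B)$, not by $G(A)$. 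For the dilation idea, $x(h^NT)\in F_h(E_P(A[T]))$ is clear, but ``bridging back to $T=1$'' from $1=a'h^N+b$ fails because $T\mapsto x(T)$ is not additive in $T$.

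The missing ingredient is the one hypothesis you never invoke: the closed embedding $G\hookrightarrow\GL_{n,B}$. The paper reformulates the induction as
\[
E_P(B_h)\,X_\beta(c)\ \subseteq\ F_h(G(A))\,E_P(B_h)\qquad(\beta\in\Phi_P,\ c\in A_h\otimes_B V_\beta),
\]
so that the conjugating element is $z\in E_P(B_h)$ rather than an arbitrary $y\in G(A)$. By Lemma~\ref{lem:EEn} one has $z\in E_n(B_h)\le E_n(A_h)$, and then Suslin's lemma \cite[Lemma~3.3]{Sus} (for $E_n$) combined with \cite[Lemma~3.5.4]{Mo} (to pull the resulting $\GL_n$-element back into $G$) produces an $N$ with
\[
z\,X_\beta(h^NZe_i)\,z^{-1}\in F_h\bigl(G(A[Z])\bigr)\quad\text{for all generators }e_i\text{ of }V_\beta.
\]
Specialising $Z\mapsto a_i\in A$ after writing $c=ah^N+h^{-M}b$ and $a=\sum a_ie_i$ then gives $z\,X_\beta(ah^N)\,z^{-1}\in F_h(G(A))\cdot(\text{height}>|\beta|)$, while $X_\beta(h^{-M}b)\in E_P(B_h)$; the higher-height tails are absorbed by the descending induction exactly as you describe. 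In short: the non-formal step is a genuine local--global lifting for $E_n$ over polynomial rings, transported to $G$ via the embedding, and your proposal does not supply a substitute for it.
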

\begin{proof}
%Since $B$ is Noetherian, we have $B=\prod\limits_{i=1}^m B_i$, where
%$B_i$, $1\le i\le m$, are Noetherian connected rings. Let $h_i$, $1\le i\le m$, be the image of $h$ in $B_i$.
%Then the rings $A$, $A_h$ and $B_h$ are direct products of the corresponding rings $A_i=A\otimes_B B_i$, ${A_i}_{h_i}$, and ${B_i}_{h_i}$.
%Moreover, for any commutative $B$-algebra $R$, we have
%$$
%G(R)=\prod_{i=1}^m G(R\otimes_B B_i),
%$$
%and
%$$
%E_P(R)=\left<U_P(R),U_{P^-}(R)\right>=\prod_{i=1}^m E_P(R\otimes_B B_i).
%$$
%Therefore, it is enough to show that the claim of the lemma holds with $B$
%replaced by each of the Noetherian connected rings $B_i$. Thus, we can assume from the start that $B$ is connected.

Since the ring $B$ is connected, by Lemmas~\ref{lem:relroots} and~\ref{lem:relschemes}
 the group $G$ over $B$ with a parabolic subgroup $P$ is provided with a split $B$-torus $S\le G^{\ad}$, the corresponding system
of relative roots $\Phi(S,G)=\Phi_P$ and relative root subschemes $X_\alpha(V_\alpha)$, where $\alpha\in\Phi_P$
and each $V_\alpha$ is a finitely generated projective $B$-module. By Lemma~\ref{lem:EPgen} one has
$$
E_P(R)=\left<X_\alpha(R\otimes_B V_\alpha),\ \alpha\in\Phi_P\right>
$$
for any $B$-algebra $R$.

One has $x=\prod\limits_{i=1}^mX_{\beta_i}(c_i)$, $c_i\in A_h\otimes_B V_{\beta_i}$, $\beta_i\in\Phi_P$.
 We need to show that $x\in F_h(G(A))E_P(B_h)$. Clearly, it is enough to show that
\begin{equation}\label{eq:EBh}
E_P(B_h)X_\beta(c)\subseteq F_h(G(A))E_P(B_h)
\end{equation}
for any $\beta\in\Phi_P$ and $c\in A_h\otimes_B V_\beta$.
We can assume that $\beta$ is a positive relative root without loss of generality.
We prove the inclusion~\eqref{eq:EBh} by descending induction on the height of $\beta$.
Let $e_1,\ldots,e_k$ be a set of generators of the $B$-module $V_\beta$.

Take any $z\in E_P(B_h)$. By Lemma~\ref{lem:EEn} we have $E_P(R)\le E_n(R)$ for any $B$-algebra $R$.
Take $R=A[Z]$, the ring of polynomials over $A$. For any $N\ge 1$ and $1\le i\le k$ one has
$$
zX_\beta(h^NZe_i)z^{-1}\in zE_n(A_h[Z],ZA_h[Z])z^{-1}\cap G(A_h[Z]).
$$
Since $z\in E_P(B_h)\le E_n(A_h)$, by~\cite[Lemma 3.3]{Sus} there exists $N_i\ge 1$ and $g_i(Z)\in E_n(A[Z],ZA[Z])$
such that $F_h(g_i(Z))=zX_\beta(h^{N_i}Ze_i)z^{-1}$. By~\cite[Lemma 3.5.4]{Mo} there is $K_i\ge 1$
such that $g_i(h^{K_i}Z)\in G(A[Z])$. Summing up, we conclude that there is $N\ge 1$ such that
\begin{equation}\label{eq:choiceN}
zX_\beta(h^NZe_i)z^{-1}\in F_h\bigl(G(A[Z])\bigr)
\end{equation}
for any $1\le i\le k$.

On the other hand, note that $Ah+B=A$ implies $Ah^n+B=A$ for any $n\ge 1$. Let $M\ge 0$ be such that
$h^Mc\in A\otimes_B V_\beta$.
Then one can find
$a\in A\otimes_B V_{\beta}$ and $b\in V_{\beta}$ such that
$$
c=ah^N+h^{-M}b.
$$
Write $a=\sum_{i=1}^ka_ie_i$, where $a_i\in A$.
By the multiplication formula for relative root elements~\eqref{eq:sum} we have
$$
X_{\beta}(c)=X_{\beta}(ah^{N})X_{\beta}(h^{-M}b)\prod\limits_{j\ge 2}X_{j\beta}\left(u_j\right),
$$
where $u_j=q^j_\beta(h^Na,h^{-M}b)\in A_h\otimes_B V_{j\beta}$, and, similarly,
$$
X_{\beta}(ah^{N})=\prod_{i=1}^k X_{\beta}(a_ih^{N}e_i)\prod\limits_{j\ge 2}X_{j\beta}(v_j),
$$
where $v_j\in A\otimes_B V_{j\beta}$.
By the choice of $N$ in~\eqref{eq:choiceN}, one has
\begin{equation}\label{eq:zaiei}
z\left(\prod_{i=1}^k X_{\beta}(a_ih^{N}e_i)\right) z^{-1}\in F_h(G(A)).
\end{equation}
It remains to note that, since the height of the relative roots $j\beta$, $j\ge 2$, is larger than that of $\beta$, the inductive
hypothesis version of the inclusion~\eqref{eq:EBh} can be applied to all elements $X_{j\beta}\left(v_j\right)$
and $X_{j\beta}\left(u_j\right)$, $j\ge 2$. Since, moreover, $z$ and $X_{\beta}(h^{-M}b)$ belong to $E_P(B_h)$, we see that
$$
z\left(\prod\limits_{j\ge 2}X_{j\beta}(v_j)\right)X_{\beta}(h^{-M}b)\left(\prod\limits_{j\ge 2}
X_{j\beta}\left(u_j\right)\right)z^{-1}\in F_h(G(A))E_P(B_h).
$$
Combining this result with~\eqref{eq:zaiei}, we conclude that
$$
zX_{\beta}(c)z^{-1}\in F_h\left(G(A)\right)E_P(B_h),
$$
which proves~\eqref{eq:EBh}.
\end{proof}

\begin{lem}\label{lem:semisimple-hdvr}
Let $R$ be a henselian discrete valuation ring. Let $K$ be the field of fractions
of $R$. Let $G$ be a semisimple simply connected $R$-group scheme
such that every semisimple normal $R$-subgroup scheme of $G$
contains a split $R$-torus $\mathbb G_{m,R}$.
Then $G$ contains a strictly proper parabolic $R$-subgroup $P$, and
$$
G(K)=G(R)E_P(K).
$$
\end{lem}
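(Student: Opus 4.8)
The plan is to combine three ingredients: (1) existence of a strictly proper parabolic, (2) the known surjectivity of $G(R)E_P(K)$ onto $G(K)$ in the split-torus/simply-connected setting (essentially strong approximation / Kneser--Tits type statements over complete local fields), and (3) the fact that over a henselian DVR reduction behaves well. First I would establish the existence of a strictly proper parabolic $R$-subgroup $P$. Since $G$ is semisimple and every semisimple normal $R$-subgroup contains a copy of $\Gm_{,R}$, each such subtorus is non-central in the corresponding normal subgroup; by Lemma~\ref{lem:T-P} it produces a pair of opposite proper parabolics in that normal subgroup, and combining these (e.g.\ taking the parabolic generated by all of them, or intersecting with each normal factor) yields a parabolic $P$ of $G$ meeting every normal semisimple subgroup properly. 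By Lemma~\ref{lem:EE} the group $E_P$ does not depend on the choice of strictly proper $P$.

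The second and main step is the equality $G(K)=G(R)E_P(K)$. The inclusion $\supseteq$ is trivial, so the content is $\subseteq$. Here I would use that $R$ is henselian: the reduction map $G(R)\to G(k)$ is surjective (smoothness plus henselian), where $k$ is the residue field. Given $g\in G(K)$, the idea is to correct $g$ first by an element of $G(R)$ so as to land in a ``bounded'' part, then by an element of $E_P(K)$. Concretely, one uses the Bruhat-type / Iwasawa decomposition attached to the parabolic $P$: the open cell $\Omega_{P^-}=U_{P^-}P = U_{P^-}\times L\times U_P$ is dense, and over the henselian local ring one can arrange $g$ to lie in this cell after multiplying by an element of $G(R)$ (using that $G(R)$ surjects onto $G(k)$ and the valuative criterion to lift). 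Then $g=u^- \ell u^+$ with $u^\pm\in U_{P^\pm}(K)$ and $\ell\in L(K)$. The unipotent parts $u^\pm$ lie in $E_P(K)$ by Lemma~\ref{lem:EPgen}, so it remains to treat $\ell\in L(K)$. Since $G$ is simply connected, $L$ is (a torus times) a semisimple simply connected group, and one invokes the Kneser--Tits property over the complete (hence henselian) local field $K$: here the relevant statement is that $L(K)$ is generated by $L(R)$ together with the $K$-points of the unipotent radicals of proper parabolics of $L$ — but these unipotent subgroups of $L$ lie inside unipotent radicals of strictly proper parabolics of $G$, hence in $E_P(K)$. Thus $\ell\in L(R)\cdot E_P(K)\subseteq G(R)E_P(K)$, and we conclude $g\in G(R)E_P(K)$.

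I would carry this out normal-factor by normal-factor, reducing to the case where $G$ is (almost) simple, so that the relative root system $\Phi_P$ is irreducible and the structure theory of Lemmas~\ref{lem:rootels} and~\ref{lem:EPgen} applies cleanly; the general case follows since $G$ is a quotient of the product of its normal semisimple subgroups and $E_P$ respects this decomposition. The delicate points: lifting $g$ into the big cell requires the henselian property together with surjectivity of $G(R)\to G(k)$ and needs to be set up so that the resulting Levi component $\ell$ is genuinely in $L(K)$ (not just $G(K)$); and the Kneser--Tits input for $L(K)$ over the henselian DVR's fraction field must be cited in the form ``$L$ isotropic simply connected over $K$ $\Rightarrow$ $L(K)$ is generated by unipotents of proper $K$-parabolics'' — this is classical over fields for simply connected isotropic groups.

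**The main obstacle** I expect is precisely this reduction to generation by unipotents: one must check that every unipotent radical of a proper parabolic of the Levi $L$ is contained in the elementary subgroup $E_P(K)$ of the ambient $G$, i.e.\ that passing to the Levi does not lose elementary generators. This should follow from Lemma~\ref{lem:EE} (independence of $E_P$ from the strictly proper parabolic) applied to a strictly proper parabolic of $G$ contained in the preimage of the given parabolic of $L$, but setting up the comparison of relative root subschemes of $G$ versus those of $L$ is the technical heart of the argument.
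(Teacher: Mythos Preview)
Your approach differs substantially from the paper's and contains a genuine gap.

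The paper's proof is short and structural: by \cite[Exp.~XXIV, Prop.~5.10]{SGA3} a simply connected semisimple $G$ decomposes as a product $\prod_i \R_{R'_i/R}(G'_i)$ with each $G'_i$ absolutely almost simple simply connected over a finite \'etale extension $R'_i/R$. Since $R$ is a henselian DVR, each $R'_i$ is a finite product of henselian DVRs $A_j$ with fraction fields $L_j$, and the isotropy hypothesis forces each $G'_i$ to be isotropic. The paper then simply cites \cite[Lemme~4.5]{Gi}, which gives $G'_i(L_j)=G'_i(A_j)E_{P'_i}(L_j)$ directly; this is the deep input, proved via Bruhat--Tits theory. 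The product decomposition assembles these into $G(K)=G(R)E_P(K)$.

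Your argument instead tries to establish the decomposition from scratch, and two steps do not go through as written. First, your justification for putting $g\in G(K)$ into the big cell --- ``using that $G(R)$ surjects onto $G(k)$ and the valuative criterion to lift'' --- does not make sense: $g$ lies in $G(K)$, not $G(R)$, so there is no reduction of $g$ modulo $\mathfrak m$ to lift from. What is actually needed here is an Iwasawa-type decomposition $G(K)=G(R)P(K)$, and that is a nontrivial consequence of Bruhat--Tits theory for henselian valued fields --- precisely the content that Gille's lemma packages.

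Second, and more seriously, you invoke ``the Kneser--Tits property over the complete (hence henselian) local field $K$''. This reverses the implication: complete implies henselian, not conversely, and the fraction field of a henselian DVR is \emph{not} complete in general. The Kneser--Tits property is not known for such $K$, so you cannot cite it. Moreover, if you take $P$ minimal over $R$, the derived group of the Levi $L$ is $R$-anisotropic and has no proper parabolics at all, so there is no Kneser--Tits statement to apply to it; you would need instead that $[L,L](K)=[L,L](R)$ for anisotropic simply connected $[L,L]$, which again requires Bruhat--Tits theory over henselian (not merely complete) DVRs. The obstacle you flagged --- embedding unipotents of Levi parabolics into $E_P(K)$ --- is real but secondary; the essential missing ingredient throughout is the Bruhat--Tits input that the paper outsources to \cite{Gi}.
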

\begin{proof}
Since $G$ is a semisimple simply connected $R$-group scheme, by~\cite[Exp. XXIV 5.3, Prop. 5.10]{SGA3}
there exist
finite \'{e}tale ring extensions
$R^{\prime}_i/R$, $1\le i\le n$, and absolutely almost simple simply connected $R'_i$-group schemes $G'_i$
such that
$$
G\cong G_1\times_{\Spec R} G_2\times_{\Spec R}\ldots\times_{\Spec R} G_n,
$$
where $G_i=\R_{R^{\prime}_i/R}(G^{\prime}_i)$ are minimal semisimple normal subgroups of $G$.
Clearly,
\begin{equation}\label{eq:Gprod}
G(K)\cong\prod\limits_{i=1}^nG'_i(K\otimes_R R_i)\quad\mbox{and}\quad G(R)\cong\prod\limits_{i=1}^nG'_i(R_i).
\end{equation}

Since each $G_i$ contains $\Gm_{,R}$, one readily sees
that each $G'_i$ is isotropic, i.e. contains $\Gm_{,R'_i}$ (see e.g. the proof of~\cite[Theorem 11.1]{PSV}).
Hence by Lemma~\ref{lem:T-P} $G'_i$ has a proper parabolic $R'_i$-subgroup $P'_i$.
Then $P_i=\R_{R'_i/R}(P'_i)$ is a proper parabolic $R$-subgroup of $G_i$, and
$$
P=P_1\times_{\Spec R} P_2\times_{\Spec R}\ldots\times_{\Spec R} P_n
$$
is a strictly proper parabolic $R$-subgroup of $G$. We have
\begin{equation}\label{eq:Eprod}
E_P(K)=\prod_{i=1}^n E_{P_i}(K)\cong\prod_{i=1}^n E_{P'_i}(K\otimes_R R'_i).
\end{equation}

Fix an $i$, $1\le i\le n$, and abbreviate $A=R'_i$, $H=G'_i$, $P'=P'_j$. Since the map $R\to A$ is finite \'etale, the ring $A$
is a product of a finite number of henselian discrete valuation rings $A_j$, $1\le j\le n$, and
$K\otimes_R A$ is the product of their respective fraction fields $L_j$, $1\le j\le n$. By~\cite[Lemme 4.5]{Gi}
one has
$$
H(K\otimes_R A)=\prod\limits_{j=1}^m H(L_j)=\prod\limits_{j=1}^m H(A_j)E_{P'}(L_j)=H(A)E_{P'}(K\otimes_R A).
$$
Combining this result with~\eqref{eq:Gprod} and~\eqref{eq:Eprod}, we deduce that
$$
G(K)\cong \prod_{i=1}^n G'_i(R'_i)E_{P'_i}(K\otimes_R R'_i)\cong\prod_{i=1}^n G_i(R)E_{P_i}(K)=G(R)E_P(K),
$$
as required.

\end{proof}

\begin{thm}
\label{th:main}
Let $R$ be a semi-local Dedekind domain. Let $K$ be the field of fractions
of $R$.
Let $G$ be a
reductive semisimple simply connected $R$-group scheme
such that every semisimple normal $R$-subgroup scheme of $G$
contains a split $R$-torus $\mathbb G_{m,R}$.
%Let $G$ be an isotropic simple simply connected $R$-group scheme.
%For any  Noetherian affine scheme $\Asc=\Spec A$ smooth
%over $k$,
Then the map
$$
H^1_{\text{\'et}}(R,G)\to H^1_{\text{\'et}}(K,G)
$$
of pointed sets induced by the inclusion of $R$ into $K$ has trivial kernel.
\end{thm}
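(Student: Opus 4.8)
The plan is to reduce, by a patching argument along the henselization of $\Spec R$ at its closed points, to the local case \cite[Theorem 4.2]{Ni} together with the two d\'evissage results Lemma~\ref{lem:semisimple-hdvr} and Lemma~\ref{lem:Nis_loc_glob}. For the set-up: a semi-local Dedekind domain is a principal ideal domain; let $\mathfrak m_1=(\pi_1),\dots,\mathfrak m_r=(\pi_r)$ be its maximal ideals, put $f=\pi_1\cdots\pi_r$ (so $R_f=K$), let $\mathcal O_i=\mathcal O_{\mathfrak m_i}$ with henselization $\mathcal O_i^{h}$ and fraction field $K_i^{h}$, and set $\hat R=\prod_{i=1}^{r}\mathcal O_i^{h}$, $\hat K=\hat R\otimes_R K=\prod_{i=1}^{r}K_i^{h}$; then $\hat R$ is the henselization of $R$ along $V(f)$ and $\hat R_f=\hat K$. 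Arguing as in the proof of Lemma~\ref{lem:semisimple-hdvr} — decomposing $G\cong\prod_j\R_{R'_j/R}(G'_j)$ into Weil restrictions of absolutely almost simple, simply connected, isotropic groups over the semi-local rings $R'_j$ and Weil-restricting a proper parabolic supplied by Lemma~\ref{lem:T-P} — I would fix a strictly proper parabolic $R$-subgroup $P$ of $G$, which can be taken as the pair of opposite parabolics attached to a suitable non-central subtorus $\Gm_{,R}\le G$ so that Lemma~\ref{lem:Nis_loc_glob} applies to it; I would also fix a closed group-scheme embedding $G\hookrightarrow\GL_{n,R}$ with $n\ge3$ (possible since $G$ is reductive). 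These data, and the strict properness of $P$, are inherited under base change $R\to\mathcal O_i^{h}$, and by Lemma~\ref{lem:EE} the subgroup $E_P(R')$ is the canonical elementary subgroup for every $R$-algebra $R'$.

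Next, given a $G$-torsor $\xi$ over $R$ that is trivial over $K$, I would note that \cite[Theorem 4.2]{Ni} applied to each discrete valuation ring $\mathcal O_i$ makes $\xi|_{\mathcal O_i}$, hence $\xi|_{\mathcal O_i^{h}}$, hence $\xi|_{\hat R}$, trivial. Since $\Spec K=\Spec R_f$ is the open complement of $V(f)$ in $\Spec R$ while $\Spec\hat R\to\Spec R$ is the henselization along $V(f)$, the square $R\to K$, $R\to\hat R$, $\hat R\to\hat K$ is an elementary distinguished square (a filtered colimit of Nisnevich distinguished squares), so by descent a $G$-torsor over $R$ amounts to a $G$-torsor over $K$, a $G$-torsor over $\hat R$, and a gluing isomorphism over $\hat K$. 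As $\xi$ is trivial over both $K$ and $\hat R$, a choice of trivializations identifies it with a double coset
$$
[g]\in\mathrm{im}\bigl(G(\hat R)\to G(\hat K)\bigr)\ \backslash\ G(\hat K)\ /\ \mathrm{im}\bigl(G(K)\to G(\hat K)\bigr),
$$
and $\xi$ is trivial over $R$ exactly when $g\in\mathrm{im}(G(\hat R))\cdot\mathrm{im}(G(K))$.

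To establish this last inclusion I would invoke the two d\'evissage lemmas. By Lemma~\ref{lem:semisimple-hdvr} over each henselian discrete valuation ring $\mathcal O_i^{h}$ one has $G(K_i^{h})=G(\mathcal O_i^{h})\,E_P(K_i^{h})$, whence $G(\hat K)=G(\hat R)\,E_P(\hat K)$; writing $g=\gamma\varepsilon$ with $\gamma\in G(\hat R)$, $\varepsilon\in E_P(\hat K)$ and absorbing $\gamma$ into the left coset, I may assume $g=\varepsilon\in E_P(\hat K)=E_P(\hat R_f)$. Then I would apply Lemma~\ref{lem:Nis_loc_glob} with $B=R$, $A=\hat R$, $h=f$: here $R\subseteq\hat R$, the element $f$ is non-nilpotent, and $\hat Rf+R=\hat R$ because henselization along $V(f)$ induces an isomorphism $R/fR\xrightarrow{\ \sim\ }\hat R/f\hat R$, while $G_R$ carries the parabolic $P$ and the embedding into $\GL_{n,R}$ required by the lemma. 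The conclusion gives $\varepsilon=F_f(y)z$ with $y\in G(\hat R)$ and $z\in E_P(R_f)=E_P(K)$, so $g=\varepsilon\in\mathrm{im}(G(\hat R))\cdot\mathrm{im}(G(K))$ and $\xi$ is trivial over $R$; since $\xi$ was arbitrary in the kernel, Theorem~\ref{th:main} follows.

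The step I expect to be the main obstacle is the patching: justifying that the square formed by $R$, its localization $K=R_f$, and the henselization $\hat R$ — even though $\Spec\hat R\to\Spec R$ is only pro-\'etale — still satisfies excision for torsors under the smooth affine group $G$. I would handle it by writing $\hat R$ as a filtered colimit of \'etale neighborhoods of $V(f)$, applying descent along each honest distinguished square, and passing to the colimit using that $G$-torsors are finitely presented. Everything else is careful left/right bookkeeping, which works out precisely because Lemma~\ref{lem:semisimple-hdvr} peels off the $G(\hat R)$-factor of $g$ on the correct side and Lemma~\ref{lem:Nis_loc_glob} then rewrites the remaining elementary element as a product of an element coming from $G(\hat R)$ and one coming from $E_P(K)$.
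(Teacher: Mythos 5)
Your proof is correct and follows essentially the same route as the paper: trivialize the torsor over the henselization(s) at the closed points and over the open complement, then use Lemma~\ref{lem:semisimple-hdvr} to replace the patching element by one lying in $E_P$ of the punctured henselization, and Lemma~\ref{lem:Nis_loc_glob} (with $B=R$, $h=f$) to split that element across the Nisnevich square. The only organizational difference is that the paper inducts on the number of maximal ideals, peeling off one henselian local ring $R'$ at a time against $R_f$ with $f$ a generator of a single maximal ideal, whereas you handle all closed points simultaneously via $\hat R=\prod_i\mathcal O_i^h$ and $R_f=K$; both versions rest on the same pro-\'etale patching step, which the paper likewise leaves implicit.
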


\begin{proof}
We prove the theorem by induction on the number of maximal ideals in $R$.
If $R$ is local then the theorem holds by~\cite{Ni}. Let $n>1$ be an integer and suppose the theorem holds for all
Dedekind domains containing strictly less than $n$ maximal ideals. Prove
that the theorem holds for a Dedekind domain $R$ with exactly $n$ maximal ideals.
Let $\mathfrak m \subset R$ be a maximal ideal and let $f\in \mathfrak m$
be its generator. Let $R'$ be the Henselization of $R$ at the maximal ideal $\mathfrak m$
and let $R_f$ be the localization of $R$ at $f$. Let $L'$ be the fraction field of $R'$.

Let $\mathcal E$ be a principal $G$-bundle over $R$ which is trivial over the field $K$.
By the inductive hypothesis $\mathcal E$ is trivial over $R_f$
and over $R'$. Thus we may assume that $\mathcal E$ is obtained by
patching over $\Spec L'$ of two trivial principal $G$-bundles
$G_f:=G\times_{\Spec R}\Spec R_f$ and $G':=G\times_{\Spec R} \Spec R'$
using an element $x \in G(L')$.

By Lemma~\ref{lem:semisimple-hdvr} $G$ contains a strictly proper parabolic $R$-subgroup $P$,
and one has
$$
G(L')=G(R').E_P(L')
$$
So, $x=x''.x'$ for some $x''\in G(R')$ and $x' \in E_P(L')$.
Replacing the patching element $x=x''.x'$ with $x' \in E_P(L')$
we do not change the isomorphism class of the principal $G$-bundle $\mathcal E$ over $R$.
Moreover, by Lemma \ref{lem:Nis_loc_glob} one can present $x'$ in the form
$x'=y.z$ with $y\in G(R')$ and $z\in E_P(R_f)$. The latter yields
the triviality of the principal $G$-bundle $\mathcal E$ over $R$, since $\Spec R_f$ and $\Spec R'$ form
a covering of $\Spec R$ for the Nisnevich topology.

\end{proof}

\renewcommand{\refname}{References}


\begin{thebibliography}{MMMM}

\bibitem[BT1]{BoTi} A.~Borel, J.~Tits, {\it Groupes r\'eductifs}, Publ. Math. I.H.\'E.S.
{\bf 27} (1965), 55--151.


\bibitem[C-T/S]{C-T-S} J.-L. Colliot-Th\'el\`ene, J.-J. Sansuc,
\emph{Principal homogeneous spaces under flasque tori: Applications},
Journal of Algebra {\bf 106} (1987), 148--205.


\bibitem[CGP10]{CGP} B.~Conrad, O.~Gabber, G.~Prasad, \emph{Pseudo-reductive groups}, New
  Mathematical Monographs, vol.~17, Cambridge University Press, Cambridge,
  2010.

\bibitem[SGA3]{SGA3} M.~Demazure, A.~Grothendieck, {\it Sch\'emas en groupes}, Lecture Notes in
Mathematics, vol. 151--153, Springer-Verlag, Berlin-Heidelberg-New York, 1970.

\bibitem[Gi]{Gi}  Ph.~Gille, {\it Le probl\`eme de Kneser-Tits}, S\'em. Bourbaki {\bf 983}
(2007), 983-01--983-39.

\bibitem[FP]{FP} R. Fedorov, I. Panin, \emph{A proof of Grothendieck--Serre conjecture on principal bundles over a semilocal regular
ring containing an infinite field}, Preprint, 2013,
%\href{http://www.arxiv.org/abs/1211.2678}
{http://www.arxiv.org/abs/1211.2678v2}, Published online in Publ. IHES (2015),\\
http://link.springer.com/article/10.1007/s10240-015-0075-z


\bibitem[Gr1]{Gr1}
A. Grothendieck,
\emph{Torsion homologique et sections rationnelles},
in Anneaux de Chow et applications,
S\'{e}minaire Claude Chevalley {\bf 3} (1958), Exp. 5, 29 p.


\bibitem[Gr2]{Gr2}
A.~Grothendieck, \emph{Le groupe de {B}rauer. {II}. {T}h\'eorie cohomologique},
  Dix expos\'es sur la cohomologie des sch\'emas, Adv. Stud. Pure Math.,
  vol.~3, North-Holland, Amsterdam, 1968, pp.~67--87.

\bibitem[Mo]{Mo} L.-F. Moser, \emph{Rational triviale Torseure und die Serre-Grothendiecksche Vermutung},
Diplomarbeit,
2008,
%\href{http://www.mathematik.uni-muenchen.de/~lfmoser/da.pdf}
http://www.mathematik.uni-muenchen.de/$\sim$lfmoser/da.pdf.


\bibitem[Ni]{Ni}
Y. Nisnevich,
\emph{Rationally Trivial Principal Homogeneous Spaces and Arithmetic of
Reductive Group Schemes Over Dedekind Rings},
C. R. Acad. Sc. Paris, S\'erie I {\bf 299} (1984), no.~1, 5--8.

\bibitem[Pa1]{Pa1}
I. Panin,
\emph{Proof of Grothendieck--Serre conjecture on principal bundles over regular local rings containing a finite field},
Preprint, https://www.math.uni-bielefeld.de/lag/, No. 559, 2015.

\bibitem[Pa2]{Pa2}
I. Panin,
\emph{On Grothendieck---Serre's conjecture concerning
principal $G$-bundles over reductive group schemes: II},
Preprint, April 2013,
{http://www.math.org/0905.1423v3}.


\bibitem[PSV]{PSV}
I. Panin, A. Stavrova, N. Vavilov,
\emph{On {G}rothendieck-{S}erre's conjecture
  concerning principal {$G$}-bundles over reductive group schemes: {I}},
  Compos. Math. \textbf{151} (2015), no.~3, 535--567.


\bibitem[PSt1]{PS} V. Petrov, A. Stavrova, {\it Elementary subgroups of isotropic reductive groups},
St. Petersburg Math. J. {\bf 20} (2009), 625--644.

\bibitem[Se]{Se}
J.-P. Serre,
\emph{Espaces fibr\'{e}s alg\'{e}briques},
in Anneaux de Chow et applications,
S\'{e}minaire Claude Chevalley {\bf 3} (1958), Exp. 1, 37 p.


\bibitem[St14]{St-poly} A. Stavrova, {\it Homotopy invariance of non-stable $K_1$-functors}, J. K-Theory
{\bf 13} (2014), 199--248.

\bibitem[St15]{St-serr} A. Stavrova,
{\it \href{http://dx.doi.org/10.4153/CJM-2015-035-2}{Non-stable $K_1$-functors of multiloop groups}},
Canad. J. Math. (2015), Online First.


\bibitem[Sus77]{Sus}
A.~A. Suslin, \emph{The structure of the special linear group over rings of
  polynomials}, Izv. Akad. Nauk SSSR Ser. Mat. \textbf{41} (1977), no.~2,
  235--252, 477.



\end{thebibliography}
\end{document}